\newtheorem{theorem}{Theorem}[section]
\newtheorem{cor}[theorem]{Corollary}
\newtheorem{lem}[theorem]{Lemma}
\newtheorem{pro}[theorem]{Proposition}
\numberwithin{equation}{section}
\newtheorem{example}{Example}
\begin{document}
\title{\vspace{-1cm} \bf Non-quadratic solutions to the   Monge-Amp\`ere equation  \rm}
\author{Yifei Pan \ \ and\ \   Yuan Zhang }
\date{}

\maketitle

\begin{abstract}
We construct ample  smooth strictly plurisubharmonic non-quadratic solutions to the   Monge-Amp\`ere equation  on either cylindrical type domains or the whole complex Euclidean space $\mathbb C^2$. Among these, the entire solutions defined on $\mathbb C^2$ induce flat  K\"ahler metrics,  as expected by a question of Calabi. In contrast,   those on cylindrical domains produce  a family of nowhere flat K\"ahler metrics. Beyond these smooth solutions, we also classify  solutions that are radially symmetric in one variable, which exhibit various types of  singularities. Finally,  we explore analogous solutions to Donaldson’s equation motivated by a result of He.

\end{abstract}

\renewcommand{\thefootnote}{\fnsymbol{footnote}}
\footnotetext{\hspace*{-7mm}
\begin{tabular}{@{}r@{}p{16.5cm}@{}}
& 2020 Mathematics Subject Classification. Primary  32W20; Secondary 32Q15. \\
& Key words and phrases. Monge-Amp\`ere equation, flat metric, radial, explicit examples, Donaldson’s equation.
 
\end{tabular}}

  \section{Introduction}

According to  fundamental works   by J\"orgens \cite{Jor}  in dimension 2 and by Calabi
\cite{Ca} and Pogorelov \cite{Po} in higher dimensions, any entire convex viscosity solution of the real Monge-Amp\`ere equation 
$$ \det(D^2 u) =1 \ \ \text{on}\ \ \mathbb R^m $$
 must be a quadratic function. See also,  for instance, \cite{Le}. The similar property  no longer holds  for entire plurisubharmonic solutions
to the complex Monge-Amp\`ere equation in $\mathbb C^n$:
\begin{equation}\label{mae}
    \det (\partial\bar\partial u) =1.
\end{equation}
Here $\partial\bar\partial u$ is the complex Hessian of $u$. Such  solutions give rise to   K\"ahler metrics via the complex Hessian      whose associated   volume forms are constant.
Calabi  posted in \cite{Ca2} the  question of whether these K\"ahler metrics are flat, which still remains open.

Motivated by those results and Calabi's question, we investigate   solutions to \eqref{mae} in $\mathbb C^2$ that depend quadratically  on    one  variable. Specifically, for    variables $(z, w)\in \mathbb C^2$, we focus on real-valued solutions that are quadratic only in the $z$ direction: 
\begin{equation}\label{smae}
    u(z, w) = a(w) |z|^2 + b(w) z^2 +\overline{b(w)z^2}+c(w) z+\overline{c(w)z}  +d(w), 
\end{equation}
where $a, b, c$ and $d$ are smooth functions of $w$, and $a>0$. This particular  form enables us to convert the fully nonlinear equation \eqref{mae}  to a system of   simpler semi-linear elliptic equations, whose solutions can be effectively approached. See \eqref{ma0}-\eqref{ma1}.   

In Section 2, by applying  existence theorems from \cite{PZ} for nonlinear systems of Poisson  equations, we   construct ample smooth, strictly plurisubharmonic  solutions over cylindrical-type domains of the form $\mathbb C\times D_R$ for any $ R>0$,   where  $D_R$ denotes the disc of radius $R$ in $\mathbb C$. See Theorem \ref{mat}. Each of these solutions possesses the    special  quadric  form given in \eqref{smae}, yet they are  not quadratic in the $w$-variable. Note that all these solutions can be trivially extended  to the $n$ dimensional case by adding quadratic terms $|z_3|^2+\cdots |z_n|^2$.

To fully understand Calabi's question in our setting, we begin in Section 3 by establishing an obstruction that prevents the  induced K\"ahler metric from being flat.

\begin{theorem}\label{obp}
Let $D$ be a domain in $\mathbb C$. Suppose $u$ is a plurisubharmonic   solution to  \eqref{mae} on $\mathbb C\times D$ of the form \eqref{smae}. Then the corresponding K\"ahler metric is flat on $\mathbb C\times D$ if and only if  $b$ is holomorphic on $D$.
\end{theorem}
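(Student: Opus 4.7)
The plan is to handle the two implications separately: the forward direction follows from computing a single curvature component, while the converse requires building an explicit biholomorphic change of variables that reduces $u$ to the standard Euclidean K\"ahler potential.

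For the forward direction, I would apply the standard K\"ahler curvature formula
\begin{equation*}
R_{i\bar j k\bar l}=-\partial_k\partial_{\bar l}g_{i\bar j}+g^{p\bar q}(\partial_k g_{i\bar q})(\partial_{\bar l}g_{p\bar j})
\end{equation*}
to compute $R_{z\bar z z\bar z}$ directly. Since $g_{z\bar z}=a(w,\bar w)$ is independent of $z$ and $\bar z$, the first term vanishes and only the $(p,q)=(w,w)$ summand survives in the sum. Using $\partial_z u_{z\bar w}=2b_{\bar w}$, $\partial_{\bar z}u_{w\bar z}=2\overline{b_{\bar w}}$, and $g^{w\bar w}=a$ (from $\det g=1$), this reduces to $R_{z\bar z z\bar z}=4a\,|b_{\bar w}|^2$. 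Flatness forces this to vanish, giving $b_{\bar w}\equiv 0$ on $D$.

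For the converse, assume $b$ is holomorphic. Substituting \eqref{smae} into \eqref{mae} and collecting powers of $z,\bar z$ (noting $b_{\bar w}=b_{w\bar w}=0$), the equation reduces to the system
\begin{equation*}
aa_{w\bar w}=|a_w|^2,\qquad ac_{w\bar w}=a_w c_{\bar w},\qquad ad_{w\bar w}-|c_{\bar w}|^2=1.
\end{equation*}
The first says $(\log a)_{w\bar w}=0$, so locally $a=|f|^2$ for some nowhere-vanishing holomorphic $f$. Writing $a_w=f_w\bar f$ in the second yields $(c_{\bar w}/f)_w=0$, so $c_{\bar w}/f=H(\bar w)$ is a function of $\bar w$ alone, and $K(w):=\overline{H(\bar w)}$ is holomorphic in $w$.

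The main step is to build flat coordinates explicitly. Let $\lambda(w)$ be a local holomorphic primitive of $K$ and $G(w)$ a local holomorphic primitive of $1/f$, and set $\Phi(z,w)=(f(w)z+\lambda(w),\,G(w))=:(Z,W)$. The Jacobian $\det D\Phi=fG'\equiv 1$, so $\Phi$ is a local biholomorphism preserving \eqref{mae}. Pulling back term by term: $a|z|^2=|Z-\lambda|^2$; the piece $bz^2+\bar b\bar z^2$ becomes $(b/f^2)(Z-\lambda)^2$ plus its conjugate, which is pluriharmonic in $(Z,W)$ by the holomorphicity of $b,f,\lambda$ in $w$; and the linear piece $cz+\bar c\bar z$ contributes a $Z$-coefficient $c/f-\bar\lambda$ whose $\bar w$-derivative equals $H(\bar w)-H(\bar w)=0$, hence is holomorphic in $W$ and contributes pluriharmonically. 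Consequently, modulo a pluriharmonic function, $u\circ\Phi^{-1}(Z,W)=|Z|^2+\phi(W,\bar W)$; since $\Phi$ preserves \eqref{mae} and the off-diagonal second derivative vanishes, $\phi_{W\bar W}=1$, so $\phi=|W|^2$ up to pluriharmonic. Thus $u\circ\Phi^{-1}$ is the standard Euclidean K\"ahler potential in the new coordinates, proving flatness. The principal obstacle is recognizing that the second reduced equation is equivalent to the antiholomorphicity of $c_{\bar w}/f$: this structural consequence of \eqref{mae} is exactly what permits the choice of $\lambda$ absorbing the linear terms, without which no biholomorphism of this form could trivialize $u$.
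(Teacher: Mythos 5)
Your forward direction is essentially the same as the paper's: the paper computes the Christoffel symbol $\Gamma_{11}^1$ and then $R^1_{1\bar 1 1}=-\partial_{\bar z}\Gamma^1_{11}=4|b_{\bar w}|^2$, whereas you compute $R_{z\bar z z\bar z}=4a|b_{\bar w}|^2$ directly from the Kähler curvature formula. These are the same curvature component up to a positive metric factor, and both immediately force $b_{\bar w}\equiv 0$.

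The converse is where you genuinely diverge. The paper takes a brute-force route: using $b_{\bar w}\equiv 0$ it integrates the reduced system to $a=e^{2\Real h}$ and $c_{\bar w}=e^{2\Real h}\bar f$ with $h,f$ holomorphic, then computes \emph{all eight} Christoffel symbols of $g=\partial\bar\partial u$, observes that each one is holomorphic in $(z,w)$, and concludes $R\equiv 0$ because for a Kähler metric the curvature components $R^\delta_{\alpha\bar\beta\gamma}=-\partial_{\bar z_\beta}\Gamma^\delta_{\alpha\gamma}$ are $\bar\partial$-derivatives of Christoffel symbols. You instead exhibit an explicit volume-preserving biholomorphism $\Phi(z,w)=(f(w)z+\lambda(w),G(w))$ under which the potential becomes $|Z|^2+\phi(W,\bar W)$ modulo pluriharmonic terms, and then the Monge--Amp\`ere constraint forces $\phi_{W\bar W}=1$, so $\partial\bar\partial(u\circ\Phi^{-1})$ is literally the Euclidean metric. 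Your approach is more conceptual and more informative: it not only proves flatness but locates the flat coordinates, and it isolates precisely why flatness holds (the second reduced equation is exactly the integrability condition that lets $\lambda$ absorb the linear coefficient, as you note). The paper's approach requires no clever change of variables but pays for it with a long Christoffel computation. One small presentational caveat in your argument: the term $-\bar\lambda Z-\lambda\bar Z$ coming from $|Z-\lambda|^2$ is not pluriharmonic on its own; it is only after combining it with $\frac{c}{f}Z$ (and the holomorphic contribution $-\frac{2b\lambda}{f^2}Z$ from the $b$-term) that the total $Z$-coefficient $\frac{c}{f}-\bar\lambda-\frac{2b\lambda}{f^2}$ becomes holomorphic in $W$, which is what you check via $\partial_{\bar w}(\frac{c}{f}-\bar\lambda)=H-H=0$. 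That cancellation is the heart of the argument and is worth stating as such rather than attributing the pluriharmonicity to the ``linear piece'' alone.
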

Making use of this theorem  one can  construct solutions to \eqref{mae} whose associated K\"ahler metric is nowhere flat over cylindrical-type domains.  See Theorem \ref{cf} and Example \ref{ex}.  In particular, these examples demonstrate that Calabi's question does not hold on any bounded domains. 

In Section 4, we produce in Theorem \ref{mate} entire solutions of \eqref{mae}  of the special form \eqref{smae} on $\mathbb C^2$. Note that the K\"ahler metrics induced by all these solutions are flat, and therefore do not provide counterexamples to Calabi’s question. %However, it remains unclear whether Calabi’s question holds for all solutions to \eqref{mae} of the form \eqref{smae}. 
Although it remains unclear whether the question holds in full generality, we show in Theorem \ref{cal} that for  every entire solutions of the form \eqref{smae}, $\frac{\partial b}{\partial \bar w}$ must have zeros somewhere.   %there are no nowhere-flat solutions of the form \eqref{smae}.

In addition to the aforementioned smooth solutions,   we also explore   in Section 5 solutions that are radially symmetric  on $w$ and exhibit singularities. In fact, we derive    explicit expressions for all such  solutions. At the end of the section, we present a variety of examples  with distinct singular  behavior (see Examples \ref{ex1}-\ref{ex2}). Notably, these  recover several existing known examples in the literature, such as B{\l}ocki \cite{Bl}, He\cite{He1}, and Wang-Wang \cite{WW}. %Of particular interest is a new solution defined in Example \ref{ex2}, which carries a large set of singularities accumulating to $\mathbb C\times \{0\}$. This solution is  not plurisubharmonic due to its frequent sign changes.    
   
   In Section 6,   more general forms of solutions that are non-quadratic in both variables are discussed, for instance, by  replacing $z$ in \eqref{smae} with a holomorphic function $\phi$ of $z$. As shown in Theorem \ref{gen}, in order for this to yield a solution to \eqref{mae}, the function  $\phi$ must take a very rigid dichotomous form  as described in \eqref{5} after normalization. Solutions involving such $\phi$ are then constructed in Theorem \ref{gens}.    
   
While  investigating the geometric structure for the space of volume forms on compact Riemannian manifolds,  Donaldson   introduced the operator $u_{tt}\Delta u -|\nabla u_t|^2 $, 
where $(t, x) \in \mathbb R\times \mathbb R^m$, and the Laplacian $\Delta$ and  the gradient $\nabla$ are both with respect to the space $x$ variable. When $m=2$, by complexifying the $t$ direction, %by letting $z= t+is$ and identifying $\mathbb R^2 =\mathbb C$ and extending $u$,  
Donaldson's operator can be reduced to a special case of the complex  Monge-Amp\`ere operator  $\det (\partial\bar\partial u).  $ In Section 7,  we generalize a result of He \cite{He} on solutions to 
\begin{equation}\label{de}
     u_{tt}\Delta u -|\nabla u_t|^2 =1,
\end{equation}
and obtain a larger class of solutions on cylindrical domains $\mathbb R\times B_R$ that are quadratic  in the $t$ variable in Theorem \ref{thd}. Here $B_R$ is the ball of radius $R $ in $\mathbb R^m$. However, for entire solutions,  Theorem \ref{thde} shows that every entire solution of the form \eqref{qde} must  reduce to He's original case.

  \section{Solvability   on cylindrical domains}
We first derive conditions for the coefficients $a, b, c$ and $d  $ in \eqref{smae} so that they yield  solutions to \eqref{mae}. Since $u$ is real-valued, so are $a$ and $d$. 
A straightforward computation gives the complex Hessian of $u$ below.
\begin{equation*}
    \partial \bar\partial u:  =  \begin{bmatrix}
u_{\bar z z} &  u_{z\bar w}\\
u_{\bar z w}  &    u_{\bar w w}
\end{bmatrix}  =  \begin{bmatrix}
 a & \ \ a_{\bar w} \bar z + 2b_{\bar w}z + c_{\bar w} \\
a_{   w} z +2\bar b_{ w}\bar z + \bar c_{ w} & \ \ a_{\bar w w}|z|^2 + b_{\bar w w}z^2 +\overline{b_{\bar w w} z^2}+c_{\bar w w} z+\overline{c_{\bar w w}z }  +d_{\bar w w}
\end{bmatrix}. 
\end{equation*}
Here   $f_w: = \frac{\partial f}{\partial w}$ for a smooth function $f$, and similarly   $f_{\bar w} : = \frac{\partial f}{\partial \bar w}$.  Consequently, taking the determinant of  the Hessian, and  sorting it out according to the powers of $z$ and $\bar z$, we obtain 
\begin{equation*}
\begin{split}
        \det( \partial \bar\partial u) = &|z|^2\left(a a_{\bar w w} -|a_w|^2 - 4|b_{\bar w}|^2 \right) + z^2\left(ab_{\bar w w}- 2a_wb_{\bar w}\right) +\bar z^2\left( a\overline{b_{\bar w w}}- 2a_{\bar w} \bar b_{ w}  \right)\\
        & + z\left(ac_{\bar w w} - a_{   w} c_{\bar w} - 2 b_{\bar w}\bar c_{ w} \right) +\bar z\left(a\bar c_{\bar w w} - a_{ \bar   w} \bar c_{w} - 2 \bar b_{ w}c_{\bar  w}   \right) + \left( ad_{\bar w w}-|c_{\bar w}|^2\right).
        \end{split}
\end{equation*}
Thus any solution $u$ to \eqref{mae} of the form \eqref{smae} should satisfy the following system of semilinear differential equations:
\begin{equation}\label{ma0}
   \left\{  \begin{aligned}
       &a a_{\bar w w} = |a_w|^2 + 4|b_{\bar w}|^2;\\
       &ab_{\bar w w} = 2a_wb_{\bar w};\\
       & ac_{\bar w w} =  a_{   w} c_{\bar w} + 2 b_{\bar w}\bar c_{ w}; \\
       & ad_{\bar w w} =|c_{\bar w}|^2+1.
    \end{aligned}\right.
\end{equation}

 To further simplify  \eqref{ma0}, noting that  $a>0$, let  
 \begin{equation}\label{tra}
     \tilde a:  = \ln a,
 \end{equation} 
 or equivalently, $ a =e^{\tilde a} $. Then $a_w =  e^{\tilde a} \tilde a_w$ and $a_{\bar w w} = e^{\tilde a} \left(\tilde a_{\bar w w} +   |\tilde a_{  w}|^2\right)$. This transformation allows us to   rewrite the system to be  
  \begin{equation}\label{ma1}
   \left\{  \begin{aligned}
      &\tilde a_{\bar w w} =   4e^{-2\tilde a} |b_{\bar w}|^2;\\
       &b_{\bar w w} = 2\tilde a_wb_{\bar w};\\
       &c_{\bar w w} =  \tilde a_{   w} c_{\bar w} + 2 e^{-\tilde a} b_{\bar w}\bar c_{ w}; \\
       &d_{\bar w w} =e^{-\tilde a}\left(|c_{\bar w}|^2+1\right).
    \end{aligned}\right.
\end{equation}
Namely, any solution to \eqref{ma1} on a domain $D\subset \mathbb C$ leads to a solution $u$ to \eqref{mae} on $\mathbb C\times D$. 

 To show the local existence of solutions to \eqref{ma1}, one can make use of  the following existence theorem for general  nonlinear systems of Poisson equations in \cite{PZ}. 
 
 \begin{theorem}\cite[Theorem 1.4]{PZ}\label{pz1}
Let  $F =(F_1,\ldots, F_N) $ be a $C^{1, \alpha}$ vector-valued function in $ \mathbb R^m\times \mathbb R^{N}\times \mathbb R^{mN}$ for some $0<\alpha<1$. Given any initial jets $(c_0, c_1)\in \mathbb R^N\times \mathbb R^{mN}$, there exist  infinitely many $C^{2, \alpha}$  solutions $v = (v_1, \ldots, v_N)$   satisfying
\begin{equation*}
    \left\{\begin{aligned}
        &\Delta v = F(\cdot, v, \nabla v);\\
        & v(0)=c_0;\\
        &\nabla v(0) = c_1 
    \end{aligned}\right.
\end{equation*}
in some small neighborhood of 0 in $\mathbb R^m$.
\end{theorem}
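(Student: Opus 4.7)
My plan is to produce the infinitely many solutions by running Banach's fixed point on a small ball of $\mathbb R^m$, parametrized by an infinite-dimensional family of harmonic perturbations that respect the prescribed initial jet.

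I first normalize the data by writing $v(x) = c_0 + c_1 \cdot x + w(x)$, which turns the problem into solving $\Delta w = \tilde F(x, w, \nabla w)$ with $w(0) = 0$ and $\nabla w(0) = 0$, where $\tilde F(x, y, p) := F(x, c_0 + c_1\cdot x + y, c_1 + p)$ is still $C^{1,\alpha}$. The key linear device is a jet-killing right inverse $\mathcal T$ for $\Delta$: for $f \in C^{0,\alpha}(\overline{B_r}; \mathbb R^N)$, set $\mathcal T f := N f - \bigl[(N f)(0) + \nabla(N f)(0) \cdot x\bigr]$, where $N f$ denotes the Newtonian potential of $f$ (extended by $0$ off $B_r$) applied component-wise. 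Since the subtracted affine piece is harmonic, $\Delta(\mathcal T f) = f$ still holds, while $(\mathcal T f)(0) = 0$ and $\nabla(\mathcal T f)(0) = 0$ by construction, and Schauder theory bounds $\|\mathcal T f\|_{C^{2,\alpha}}$ by $\|f\|_{C^{0,\alpha}}$.

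For each harmonic polynomial $h \colon \mathbb R^m \to \mathbb R^N$ with $h(0) = 0$ and $\nabla h(0) = 0$, I then seek $w$ in the form $w = h + u$ where $u$ also has vanishing $1$-jet at $0$ and solves the fixed-point equation $u = \mathcal T\bigl(\tilde F(\cdot, h + u, \nabla h + \nabla u)\bigr)$ on $B_r$. On a ball of sufficiently small radius (depending on $h$), I expect this operator to be a strict contraction on the closed subspace $\{u \in C^{2,\alpha}(\overline{B_r}) : u(0) = 0,\ \nabla u(0) = 0\}$: the smallness gain comes from the Taylor estimate $\|u\|_{C^0(B_r)} = O(r^2)\|u\|_{C^{2,\alpha}}$ combined with a rescaling to the unit ball $y = x/r$, under which the composition with $\tilde F$ picks up explicit positive powers of $r$. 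Banach's theorem then produces a unique fixed point $u_h$ and hence a local solution $v_h := c_0 + c_1\cdot x + h + u_h$ of the original system.

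To see that this really yields infinitely many distinct solutions, I rescale along a one-parameter family $h = t h_0$, with $h_0$ a fixed non-zero harmonic polynomial vanishing to first order at $0$, and differentiate the fixed-point relation in $t$ at $t = 0$; the derivative $\partial_t u_t$ at $t = 0$ solves a linear equation whose forcing involves $h_0$, and the same smallness that gave the contraction forces $\|\partial_t u_t|_{t=0}\|_{C^{2,\alpha}}$ to be small, so $\partial_t v_t|_{t=0} = h_0 + \partial_t u_t|_{t=0} \neq 0$, and $t \mapsto v_t$ is a non-constant continuous curve of solutions. The main technical hurdle is the contraction estimate: because the Hölder seminorm of $\nabla u$ is not improved by the vanishing $1$-jet, a naive Schauder bound on $B_r$ does not close, and one must rescale to the unit ball so that the $C^{1,\alpha}$ dependence of $\tilde F$ on $(y, p)$ carries explicit factors of $r$. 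Once this linear-to-nonlinear passage is secured, Banach's theorem and the one-parameter distinctness argument are routine.
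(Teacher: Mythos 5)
This statement is quoted from \cite{PZ} and the present paper does not reproduce its proof, so I cannot compare your argument against the authors' own; judging from the title of \cite{PZ} (a ``residue-type phenomenon''), their method is likely a different and more structural one. What you propose is the classical route: a jet-killing right inverse $\mathcal T$ for $\Delta$ built from the Newtonian potential, a Banach fixed-point on the closed subspace of $C^{2,\alpha}$ functions with vanishing $1$-jet, and an infinite-dimensional harmonic ``gauge'' parametrizing the non-uniqueness. The overall skeleton is sound, and you correctly identify the genuinely delicate point, namely that the top-order H\"older seminorm is not softened by the vanishing $1$-jet, so a naive Schauder bound on $B_r$ does not self-improve.

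Two places, however, are stated too loosely to close without more work. First, the contraction. Your explanation that ``the $C^{1,\alpha}$ dependence of $\tilde F$ on $(y,p)$ carries explicit factors of $r$ after rescaling'' is not quite the mechanism. After the rescaling $\hat w(X)=r^{-2}w(rX)$ on $B_1$, the equation reads $\Delta_X\hat w=\tilde F\bigl(rX,\,r^2\hat w,\,r\nabla_X\hat w\bigr)$; the smallness comes from the fact that, in the Lipschitz estimate for the fixed-point map, the differences $r^2(\hat w_1-\hat w_2)$ and $r(\nabla\hat w_1-\nabla\hat w_2)$ bring explicit powers of $r$ into both the $C^0$ and the $C^\alpha$ seminorm of the composed right-hand side, while the scale-invariant Schauder estimate on $B_1$ keeps the inverse bounded. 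You should also note that the global $C^{1,\alpha}$ hypothesis on $F$ is used precisely to make the Lipschitz and H\"older constants of $\nabla_{(y,p)}\tilde F$ uniform, so the contraction factor is genuinely $O(r)$ with constants independent of the ball $\{\|\hat w\|_{C^{2,\alpha}}\le K\}$; otherwise you must verify that the map sends some fixed ball to itself before invoking the contraction.

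Second, the distinctness. Differentiating the fixed-point relation in the parameter $t$ along $h=th_0$ requires that $t\mapsto u_t$ be $C^1$ in $C^{2,\alpha}$; this follows from the implicit function theorem (or from uniform contraction with parameters), but it is an extra step, not a consequence of the contraction alone. Granting it, the linearized relation gives $\|\partial_t u_t|_{t=0}\|\le \tfrac{\theta'}{1-\theta}\|h_0\|$ where $\theta,\theta'=O(r)$; one then needs $\theta'/(1-\theta)<1$ to conclude $\partial_t v_t|_{t=0}=h_0+\partial_t u_t|_{t=0}\ne 0$ and hence local injectivity of $t\mapsto v_t$. These estimates are plausible and follow from the same bookkeeping as the contraction, but your current write-up asserts rather than establishes them. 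In short: the approach works, but the two estimates above are the actual content and should be carried out explicitly rather than gestured at.
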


 This gives  ample smooth solutions for the pair  $(\tilde a, b) $ in \eqref{ma1}   near a neighborhood of $0$, whose jets up to order 1 at $0$ can be  prescribed arbitrarily. Consequently,  a rescaling method can be used to obtain  solutions on any bounded domain. This is due to the special structure of the equation \eqref{mae}, and the fact that  if  $u$ is a solution  on $B_r$ for some $r>0$, then $\tilde u(z, w): = \frac{R^2}{r^2}u(rz/R, rw/R)$ is a solution on $B_R$ for any $R>0$.

Alternatively we present another approach to obtaining infinitely many solutions to \eqref{ma1} that do  not rely on the rescaling process, but instead by making use of   an  existence theorem \cite[Theorem 1.6]{PZ} on large domains  as follows. 

\begin{theorem}\cite[Theorem 1.6]{PZ}\label{pz}
Let  $F =(F_1,\ldots, F_N) $ be a $C^2$ vector-valued function of  variables $(X, Y)\in \mathbb R^{N}\times \mathbb R^{mN}$, with $F(0)=0$ and $ \nabla F(0)=0$. For any $R>0$ and $0<\alpha <1$, there exist infinitely many $C^{2, \alpha}$ solutions $v = (v_1, \ldots, v_N)$ to the partial   differential system
    $$ \Delta v = F(v, \nabla v)\ \ \text{on}\ \ B_R. $$
 Moreover, all these solutions are of vanishing order 2 at $0\in \mathbb R^m$. Namely. $v(0)=0, \nabla v(0)=0, \nabla^2 v(0) \ne  0$. %and are not radially symmetric.
\end{theorem}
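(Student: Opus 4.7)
The plan is to recast the equation as a contraction-mapping fixed-point problem, perturbing around a harmonic function that already vanishes to order two at the origin. The key inputs from the hypothesis are the quadratic bound $|F(X,Y)| \lesssim |X|^2 + |Y|^2$ and the Lipschitz-type estimate $|F(X_1,Y_1) - F(X_2,Y_2)| \lesssim (|X_1| + |X_2| + |Y_1| + |Y_2|)(|X_1-X_2| + |Y_1-Y_2|)$ valid on any bounded subset of $\mathbb R^N \times \mathbb R^{mN}$; both follow from $F \in C^2$ together with $F(0)=0$ and $\nabla F(0)=0$, and both drive the argument.

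Fix a (vector-valued) harmonic polynomial $h$ on $B_R$ with $h(0) = 0$, $\nabla h(0) = 0$, and $\nabla^2 h(0) \ne 0$. Such $h$ form an infinite-dimensional family once one allows arbitrary degree, and for $m \ge 2$ already the degree-two harmonic polynomials supply an infinite parameter space after accounting for the $N$ components. Rescale $h \mapsto \epsilon h$ with $\epsilon > 0$ small, to be chosen. I would then look for a solution of the form $v = \epsilon h + w$, where $w$ lies in the closed Banach subspace $X := \{w \in C^{2,\alpha}(\overline{B_R}; \mathbb R^N) : w(0) = 0,\ \nabla w(0) = 0\}$. The PDE rewrites as $\Delta w = F(\epsilon h + w,\, \epsilon \nabla h + \nabla w)$.

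Define a map $T: X \to X$ as follows: given $w$, solve the Dirichlet problem $\Delta \tilde w = F(\epsilon h + w,\, \epsilon \nabla h + \nabla w)$ on $B_R$ with $\tilde w|_{\partial B_R} = 0$, and set $T(w) := \tilde w - \tilde w(0) - \nabla \tilde w(0) \cdot x$. The subtracted affine term is harmonic, hence $T(w)$ satisfies the same Poisson equation, and by construction $T(w) \in X$. Schauder estimates combined with the quadratic bound on $F$ give $\|T(w)\|_{C^{2,\alpha}} \le C(R)(\epsilon^2 + \|w\|_{C^{2,\alpha}}^2)$, while the Lipschitz estimate yields $\|T(w_1) - T(w_2)\|_{C^{2,\alpha}} \le C(R)(\epsilon + \|w_1\|_{C^{2,\alpha}} + \|w_2\|_{C^{2,\alpha}})\|w_1 - w_2\|_{C^{2,\alpha}}$. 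Choosing $\epsilon$ sufficiently small, $T$ sends a small closed ball in $X$ into itself and contracts there, and Banach's theorem delivers a unique fixed point $w_h$.

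The resulting $v_h := \epsilon h + w_h$ solves the PDE and lies in $X$; moreover $\nabla^2 v_h(0) = \epsilon \nabla^2 h(0) + O(\epsilon^2) \ne 0$ since $\|w_h\|_{C^{2,\alpha}}$ is of quadratic size $O(\epsilon^2)$ while $\nabla^2 h(0) \ne 0$. Infinitely many distinct solutions then follow from varying $h$ within the family above, as the order-two Taylor jets at $0$ distinguish them pairwise. The only real technical hurdle is the bookkeeping in the subtraction step: one must verify that the correction $\tilde w - \tilde w(0) - \nabla \tilde w(0) \cdot x$ is still controlled in $C^{2,\alpha}(\overline{B_R})$ by $\|\tilde w\|_{C^{2,\alpha}}$, which is immediate from the trivial bounds $|\tilde w(0)|,\, |\nabla \tilde w(0)| \lesssim \|\tilde w\|_{C^{2,\alpha}}$ on the bounded domain $B_R$.
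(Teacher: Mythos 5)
Theorem~\ref{pz} is not proved in this paper: it is imported verbatim from \cite[Theorem~1.6]{PZ} and used as a black box (the only internal alternative the authors mention is the rescaling trick, which is specific to the Monge--Amp\`ere structure and bypasses this theorem entirely). There is therefore no in-paper argument to compare against. The title of \cite{PZ} --- ``a residue-type phenomenon'' --- suggests the cited proof proceeds through series or residue-type expansions rather than a perturbative fixed point, but I cannot confirm this from what is given here.

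Judged on its own merits, your contraction scheme --- seed $v = \epsilon h + w$ with $h$ harmonic and vanishing to order two, solve the Dirichlet problem, re-center the one-jet by subtracting the affine (hence harmonic) part, and close the estimates using the quadratic degeneracy of $F$ at the origin --- is a sound and fairly standard route to the statement, and I see no fatal gap. Three places deserve tightening. (i) ``Degree-two harmonic polynomials supply an infinite parameter space'' is misleading: for fixed $m,N$ that space is finite-dimensional. What actually manufactures infinitely many solutions is either running $\epsilon$ through a one-parameter family with $h$ fixed, or varying $h$ over a family with uniformly bounded $C^{2,\alpha}(\overline{B_R})$ norm so that a single admissible $\epsilon$ serves for all of them. (ii) ``The order-two Taylor jets at $0$ distinguish them pairwise'' is asserted rather than argued; with $\nabla^2 v_\epsilon(0) = \epsilon\nabla^2 h(0) + O(\epsilon^2)$ one should, for instance, extract a sequence $\epsilon_k\downarrow 0$ fast enough that the intervals $\bigl[\epsilon_k(|\nabla^2h(0)|-K\epsilon_k),\ \epsilon_k(|\nabla^2h(0)|+K\epsilon_k)\bigr]$ are pairwise disjoint, which forces the $v_{\epsilon_k}$ to be distinct. (iii) Your choice of $\epsilon$ depends on $R$ through the interior Schauder constant $C(R)$; this is consistent with the statement, which only asserts existence for each fixed $R$, but it is worth flagging so that ``for any $R>0$'' is not misread as a uniformity claim.
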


 It should be pointed out that, although \cite{PZ}   only focuses on real-valued systems, by splitting $v$ and $F$ into  real and imaginary parts accordingly,  Theorem \ref{pz1} and Theorem \ref{pz} can be easily applied to  complex-valued systems that we will be having.

%The following theorem constructs infinitely many   real-valued   solutions to \eqref{mae} on $\mathbb C\times D_R$ of the  quadratic form \eqref{smae} in the $z$ direction. 
  \begin{theorem}\label{mat}
   For each $R>0$, there exist infinitely many smooth strictly plurisubharmonic  functions  that satisfy \eqref{mae} on $  \mathbb C\times D_R$. All of these solutions  take on a special quadric form \eqref{smae}    in the first variable $z$. 
\end{theorem}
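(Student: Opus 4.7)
The plan is to construct the coefficients $\tilde a, b, c, d$ in \eqref{ma1} sequentially, exploiting the triangular structure of that system: the first two equations couple only $\tilde a$ and $b$; once these are fixed, the third equation is linear in $c$; and the fourth is a linear Poisson equation in $d$. All right-hand sides are smooth functions of the unknowns and their first derivatives, so once the pair $(\tilde a, b)$ is produced, the remaining two equations pose no serious difficulty.

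The main step is to produce $(\tilde a, b)$ via Theorem \ref{pz}. Using $\partial_w \partial_{\bar w} = \tfrac14 \Delta$, the subsystem for $(\tilde a, b)$ takes the form
$$\Delta \tilde a \;=\; 16\, e^{-2\tilde a}\,|b_{\bar w}|^2, \qquad \Delta b \;=\; 8\, \tilde a_w\, b_{\bar w}.$$
Splitting $b$ into its real and imaginary parts realizes this as a real semilinear system $\Delta v = F(v, \nabla v)$ on $D_R \subset \mathbb R^2$. The key observation is that every term on the right-hand side is a product of at least two first-order partial derivatives of the unknowns; even the factor $e^{-2\tilde a}$, Taylor-expanded at $\tilde a = 0$, contributes only smooth perturbations of a constant multiplying the quadratic gradient term $|b_{\bar w}|^2$. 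Hence $F(0) = 0$ and $\nabla F(0) = 0$, which are precisely the hypotheses of Theorem \ref{pz}. That theorem then supplies infinitely many distinct $C^{2, \alpha}$ solutions $(\tilde a, b)$ on $D_R$, each vanishing to second order at $0$.

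Given such a pair, I would simply take $c \equiv 0$, which solves the third equation in \eqref{ma1} trivially and avoids invoking any further existence result. The fourth equation then reduces to the linear Poisson problem $d_{\bar w w} = e^{-\tilde a}$ on $D_R$, which is always solvable (for example via a Newtonian potential of $4e^{-\tilde a}$). Standard elliptic bootstrapping, using the smoothness of all right-hand sides in \eqref{ma1}, upgrades $(\tilde a, b, c, d)$ from $C^{2, \alpha}$ to $C^\infty$.

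Finally, setting $a = e^{\tilde a}$ and assembling $u$ via \eqref{smae} produces a smooth solution to \eqref{mae} on $\mathbb C \times D_R$ of the required quadric form. Strict plurisubharmonicity is automatic: $u_{z\bar z} = a > 0$ together with $\det(\partial \bar \partial u) = 1 > 0$ forces $\partial \bar \partial u$ to be positive definite everywhere. Distinct choices of $(\tilde a, b)$ give rise to distinct $u$, since $a$ and $b$ can be recovered from $u$ as $u_{z\bar z}$ and $\tfrac12 u_{zz}$, respectively; this yields the claimed infinitude of solutions. The main technical point I anticipate is the careful verification that the vector-valued right-hand side, after converting the complex $(\tilde a, b)$ subsystem into real coordinates, genuinely satisfies both $F(0) = 0$ and $\nabla F(0) = 0$; once this is in place, the remainder of the construction is essentially mechanical.
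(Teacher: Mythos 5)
Your proposal is correct and follows essentially the same route as the paper: apply Theorem \ref{pz} to the $(\tilde a,b)$ subsystem after verifying $F(0)=0$ and $\nabla F(0)=0$, then solve the remaining two equations linearly and bootstrap to smoothness. The only cosmetic difference is that you fix $c\equiv 0$ where the paper allows $c$ to be any holomorphic function, and you spell out the $2\times 2$ positivity argument that the paper only alludes to; neither changes the substance.
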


\begin{proof}

Take  $v= (\tilde a, b)$ in Theorem \ref{pz}, and $F(X_1, X_2, Y_1, Y_2, Y_3, Y_4) = (16e^{-2X_1}|Y_4|^2, 8Y_1Y_4) $. One can check that 
  $$ \Delta (\tilde a, b) = F(\tilde a, b, \tilde a_w, b_w, \tilde a_{\bar w}, b_{\bar w}),$$
  with $F(0)=0$ and $ \nabla F(0)=0$. Theorem \ref{pz} thus gives rise to infinitely many  $C^{2, \alpha}$ solutions $(\tilde a, b) $ on $D_R$ which are of vanishing order 2 at $0$.  
  By a standard bootstrapping argument, these solutions are necessarily smooth on $D_R$.

  Now, with each such  pair of solution  $(\tilde a, b) $,  let  $c$ be any holomorphic function. Then $c$ automatically satisfies
   \begin{equation}\label{c}
       c_{\bar w w} =  \tilde a_{   w} c_{\bar w} + 2 e^{-\tilde a} b_{\bar w}\bar c_{ w}\ \ \text{on}\ \ D_R.
   \end{equation} 
%Since this is a system of a homogeneous linear equations, there exists a smooth solution $c$ in $D_R$, say, for instance, $c\equiv \text{const}$.  
Finally, substituting  such $(\tilde a, b, c) $ into  the linear   equation: 
\begin{equation}\label{d}
    d_{\bar w w} =e^{-\tilde a}\left(|c_{\bar w}|^2+1\right) \ \ \text{on}\ \ D_R 
\end{equation} 
 to solve for a smooth $d$ on $D_R$. Altogether, we have obtained $a, b, c$ and $ d$ such that the system \eqref{ma0} is satisfied, and thus $u$ of the form \eqref{smae} with these coefficients provides infinitely many solutions to \eqref{mae}. That the solutions are strictly plurisubharmonic  is because we are  dealing with  $2\times 2$ Hessians.    
\end{proof}
  
  In particular, the theorem generates infinitely many smooth K\"ahler metrics $ \partial \bar\partial u$   whose   volume forms are constant. In fact, for any smooth positive function $f$ on $\mathbb C\times D_R$, the similar approach as in the proof  can be applied to produce infinitely many smooth strictly plurisubharmonic  functions defined on $  \mathbb C\times D_R$ that satisfy
$$  \det (\partial\bar\partial u) =f\ \ \text{on}\ \  \mathbb C\times D_R, $$

\section{An obstruction to a flat   metric}
In this section we   prove Theorem \ref{obp}:  an obstruction for a solution to \eqref{mae}   of the form \eqref{smae} to induce a flat K\"ahler metric. As an application, this leads to the construction of  a large class of solutions, not obtained via rescaling, to \eqref{mae} such that the associated   K\"ahler metrics are nowhere flat on cylindrical domain $\mathbb C\times D_R$  for any $R>0$. % fails demonstrate that  Calabi's question regarding solutions The key to our construction is the following proposition, which 

\begin{proof}[Proof of Theorem \ref{obp}: ]
 We first compute  the  K\"ahler metric $g =\partial \bar\partial u$ for a solution $u$ of the form  \eqref{smae}.  
\begin{equation}\label{g}
\begin{split}
    g =      \begin{bmatrix}
g_{1\bar 1  } &  g_{1\bar 2}\\
g_{ 2\bar 1}  &    g_{ 2 \bar 2}
\end{bmatrix}:   =\begin{bmatrix}
 a & \ \ a_{\bar w} \bar z + 2b_{\bar w}z + c_{\bar w} \\
a_{   w} z +2\bar b_{ w}\bar z + \bar c_{ w} & \ \ a_{\bar w w}|z|^2 + b_{\bar w w}z^2 +\overline{b_{\bar w w} z^2}+c_{\bar w w} z+\overline{c_{\bar w w}z }  +d_{\bar w w}
\end{bmatrix}. 
\end{split}
   \end{equation}
Since $\det g =1$, the inverse matrix $g^{-1}$ of $g$ is 
\begin{equation*}
\begin{split}
    g^{-1} =      \begin{bmatrix}
g^{\bar 1  1 } &  g^{\bar 1 2}\\
g^{ \bar 2 1}  &    g^{ \bar 2  2}
\end{bmatrix}  =  \begin{bmatrix}
a_{\bar w w}|z|^2 + b_{\bar w w}z^2 +\overline{b_{\bar w w} z^2}+c_{\bar w w} z+\overline{c_{\bar w w}z }  +d_{\bar w w} & \ \ -a_{\bar w} \bar z -2b_{\bar w}z - c_{\bar w} \\
-a_{   w} z -2\bar b_{ w}\bar z - \bar c_{ w} & \ \ a
\end{bmatrix}. 
\end{split}
   \end{equation*}

According to the general formulae for Christoffel symbols under a K\"ahler metric $g$ (see, for instance, \cite{Ba}\cite{Jos}):
 \begin{equation}\label{cs}
     \Gamma^\alpha_{\beta\gamma} = \frac{\partial g_{\gamma\bar \nu}}{\partial z_\beta}g^{\bar \nu \alpha}, \alpha, \beta, \gamma \in \{1, \ldots, n\}, 
 \end{equation}
 a direct computation gives  
  \begin{equation*}
     \begin{split}
        \Gamma_{11}^1  = \frac{\partial g_{1\bar 1}}{\partial z}g^{\bar 1  1} + \frac{\partial g_{1\bar 2}}{\partial z}g^{\bar 2  1} = &  -2b_{\bar w}\left(a_{   w} z +2\bar b_{ w}\bar z + \bar c_{ w} \right).        \end{split}
 \end{equation*}
Since  the K\"ahler metric is flat,    the curvature tensor $R\equiv 0$. In particular,  
\begin{equation}\label{ob}
    R_{1\bar 11}^1 = -\frac{\partial \Gamma_{11}^1}{\partial \bar z} =  4|b_{\bar w}|^2 \equiv 0.  
\end{equation} 
Namely, $b$ must be holomorphic on $D$.

Conversely, if $b_{\bar w}\equiv 0$ on $D$, then by \eqref{ma1}  
 \begin{equation}\label{ma2}
   \left\{  \begin{aligned}
      &\tilde a_{\bar w w} =  0;\\
      &c_{\bar w w} =  \tilde a_{   w} c_{\bar w}; \\
       &d_{\bar w w} =e^{-\tilde a}\left(|c_{\bar w}|^2+1\right).
    \end{aligned}\right.
\end{equation}
From the first equation above, there exists some holomorphic function $h$ on $D$ such that \begin{equation}\label{a2}
    \tilde a = 2\text{Re}(h),\ \ \text{ and so}\ \  a = e^{2\text{Re}(h)}. 
\end{equation} Letting $f: = e^{- \tilde a} \bar c_{ w} $,    from   the second equation in \eqref{ma2}, one can see that 
$$ f_{\bar w} =  e^{- \tilde a} \bar c_{\bar ww} - e^{- \tilde a} \tilde a_{\bar w} \bar c_{ w}=  e^{- \tilde a} (\overline{ c_{\bar ww} - \tilde a_w c_{\bar w}   }) =  0. $$
Namely, $f$ is holomorphic on $D$ and \begin{equation}\label{c2}
    c_{\bar w} = e^{2\text{Re}(h)}\bar f.
\end{equation} 
Finally, plugging \eqref{c2} into the last equation in \eqref{ma2} we have 
\begin{equation}\label{d2}
    d_{\bar w w} =  e^{2\text{Re}(h)}|f|^2+ e^{-2\text{Re}(h)}.
\end{equation} 
  
    Next we verify that  the   curvature tensor with respect to the  K\"ahler metric $g =\partial \bar\partial u$ is zero for such solutions with $b_{\bar w}=0$. Substituting \eqref{a2}-\eqref{d2} into \eqref{g},   the corresponding K\"ahler metric becomes
 \begin{equation*}
\begin{split}
    g =      \begin{bmatrix}
g_{1\bar 1  } &  g_{1\bar 2}\\
g_{ 2\bar 1}  &    g_{ 2 \bar 2}
\end{bmatrix}:   =\begin{bmatrix}
 e^{h}e^{\bar h} & \ \  e^he^{\bar h}\overline{h'}\bar z  +    e^{h}e^{\bar h}\bar f \\
e^he^{\bar h} {h'}  z  +  e^{h}e^{\bar h}  f & \ \ e^he^{\bar h}|h'|^2|z|^2  +   e^{h}e^{\bar h}h'\bar f z+e^{h}e^{\bar h}\overline{ h'} f \bar z   +  e^{h}e^{\bar h}|f|^2+ e^{-h}e^{-\bar h}
\end{bmatrix}. 
\end{split}
   \end{equation*}
Then the inverse matrix $g^{-1}$ of $g$ is 
\begin{equation*}
\begin{split}
    g^{-1} =      \begin{bmatrix}
g^{\bar 1  1 } &  g^{\bar 1 2}\\
g^{ \bar 2 1}  &    g^{ \bar 2  2}
\end{bmatrix}  =  \begin{bmatrix}
e^he^{\bar h}|h'|^2|z|^2  +   e^{h}e^{\bar h}h'\bar f z+e^{h}e^{\bar h}\overline{ h'} f \bar z  + e^{h}e^{\bar h}|f|^2+ e^{-h}e^{-\bar h} & \ \  -e^he^{\bar h}\overline{h'}\bar z  -    e^{h}e^{\bar h}\bar f \\
-e^he^{\bar h} {h'}  z -  e^{h}e^{\bar h}  f & \ \ e^{h}e^{\bar h}
\end{bmatrix}. 
\end{split}
   \end{equation*}
Here for any holomorphic function $g$ of the variable $w$, we use the notation $g':= \frac{\partial g}{\partial w}$.

 Making use of the formula \eqref{cs} we compute  all the Christoffel symbols as follows.
   \begin{equation*}
     \begin{split}
        \Gamma_{11}^1  = &\frac{\partial g_{1\bar 1}}{\partial z}g^{\bar 1  1} + \frac{\partial g_{1\bar 2}}{\partial z}g^{\bar 2  1} =  0;\\
        \Gamma_{12}^1  =  & \frac{\partial g_{2\bar 1}}{\partial z}g^{\bar 1  1} + \frac{\partial g_{2\bar 2}}{\partial z}g^{\bar 2  1} =  e^he^{\bar h} {h'}\left( e^he^{\bar h}|h'|^2|z|^2 +   e^{h}e^{\bar h}h'\bar f z+e^{h}e^{\bar h}\bar h' f \bar z  + e^{h}e^{\bar h}|f|^2+ e^{-h}e^{-\bar h} \right) +\\
        &+e^he^{\bar h}h'\left(\overline{ h'} \bar z+ \bar f\right)\left( - e^he^{\bar h} {h'}  z -  e^{h}e^{\bar h}  f\right) = h';\\
        \Gamma_{21}^1 = &\frac{\partial g_{1\bar 1}}{\partial w}g^{\bar 1  1} + \frac{\partial g_{1\bar 2}}{\partial w}g^{\bar 2  1} =  \Gamma_{21}^1 = h';\\
         \Gamma_{22}^1 =  & \frac{\partial g_{2\bar 1}}{\partial w}g^{\bar 1  1} + \frac{\partial g_{2\bar 2}}{\partial w}g^{\bar 2  1} = e^he^{\bar h}\left( (h')^2z    +  {h''}z +h'f+f'  \right) \left[e^he^{\bar h}(|h'|^2|z|^2+   h'\bar f z+\overline{ h'} f \bar z  + |f|^2)+ e^{-h}e^{-\bar h}\right] + \\
         &+\left[e^he^{\bar h}\left[(h'|h'|^2 +h''\overline{h'} )|z|^2+ ((h')^2+h'')\bar f z+ (|h'|^2f+ \overline{h'}f')\bar z +(h'|f|^2+ f'\bar f) \right] + e^{-h}e^{-\bar h}(-h')\right]\\
         &\left( - e^he^{\bar h} {h'}  z-  e^{h}e^{\bar h}  f\right) =  2(h'f+(h')^2z)+ h''z+f';\\
        \Gamma_{11}^2 =  &\frac{\partial g_{1\bar 1}}{\partial z}g^{\bar 1  2} + \frac{\partial g_{1\bar 2}}{\partial z}g^{\bar 2  2} = 0;\\
          \Gamma_{12}^2 =   &\frac{\partial g_{2\bar 1}}{\partial z}g^{\bar 1 2} + \frac{\partial g_{2\bar 2}}{\partial z}g^{\bar 2  2}= e^he^{\bar h} {h'} \left( -e^he^{\bar h}\overline{h'}\bar z  -    e^{h}e^{\bar h}\bar f \right) +  \left(e^he^{\bar h}|h'|^2\bar z+ e^{h}e^{\bar h}h'\bar f\right)e^he^{\bar h} =0;\\
         \Gamma_{21}^2 =  &\frac{\partial g_{1\bar 1}}{\partial w}g^{\bar 1  2} + \frac{\partial g_{1\bar 2}}{\partial w}g^{\bar 2  2}  =  \Gamma_{12}^2 = 0;\\
          \Gamma_{22}^2 =  &\frac{\partial g_{2\bar 1}}{\partial w}g^{\bar 1  2} + \frac{\partial g_{2\bar 2}}{\partial w}g^{\bar 2  2}  = e^he^{\bar h}\left( (h')^2z    +  {h''}z +h'f+f'  \right)e^he^{\bar h} \left(  -\overline{h'}\bar z  -   \bar f\right) +\\
          &+\left[e^he^{\bar h}\left[(h'|h'|^2 +h''\overline{h'} )|z|^2+ ((h')^2+h'')\bar f z+ (|h'|^2f+ \overline{h'}f')\bar z +(h'|f|^2+ f'\bar f) \right] + e^{-h}e^{-\bar h}(-h')\right]e^he^{\bar h}\\
          =&  -h'.
        \end{split}
 \end{equation*}
Specifically, all the Christoffel symbols are holomorphic.  Since for a K\"ahler metric, all the curvature tensor coefficients vanish, except 
 $$ R_{\alpha\bar\beta\gamma}^\delta = -\frac{\partial \Gamma_{\alpha\gamma}^\delta}{\partial \bar z_\beta}, R_{\bar\alpha\beta\gamma}^\delta = \frac{\partial \Gamma^\delta_{\beta\gamma}}{\partial \bar z_\alpha}, \ \ \alpha, \beta, \gamma\in \{1, \ldots, n\} $$
 and their conjugates, we have the curvature $R\equiv 0$. 
\end{proof}

\begin{theorem}\label{cf}
For any $R>0$, there exist infinitely many real-analytic plurisubharmonic  solutions   to \eqref{mae} on $\mathbb C\times D_R$ such that the corresponding K\"ahler metric is nowhere flat.  
\end{theorem}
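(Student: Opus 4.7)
The plan is to promote Theorem \ref{obp} from a global criterion into a pointwise tool. Inside the proof of Theorem \ref{obp} one computes explicitly that $R^{1}_{1\bar 1 1}(z,w) = 4|b_{\bar w}(w)|^2$, so the induced K\"ahler metric fails to be flat at $(z,w)$ exactly when $b_{\bar w}(w)\neq 0$. Consequently, to produce nowhere flat examples on $\mathbb C\times D_R$ it is enough to exhibit real-analytic solutions $(\tilde a,b,c,d)$ of the system \eqref{ma1} for which $b_{\bar w}$ is nowhere vanishing on $D_R$.

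The key reduction is that the second equation of \eqref{ma1} may be rewritten as $\partial_w\bigl(e^{-2\tilde a}\,b_{\bar w}\bigr)=0$, forcing $e^{-2\tilde a}\,b_{\bar w}$ to be antiholomorphic in $w$. I therefore posit
$$b_{\bar w} = e^{2\tilde a}\,\overline{h(w)}$$
for some holomorphic $h$ on $D_R$. Under this ansatz the first equation of \eqref{ma1} collapses to the Liouville-type equation
$$\Delta \tilde a = 16\,|h(w)|^2\,e^{2\tilde a}\quad\text{on}\ D_R.$$
Choosing $h$ to be any nowhere zero holomorphic function on $D_R$ then automatically makes $b_{\bar w}=e^{2\tilde a}\overline{h}$ nowhere vanishing.

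To solve this equation I would set $v:=2\tilde a+\log(16|h|^2)$; since $\log|h|^2$ is harmonic where $h\neq 0$, the equation reduces to the classical form $\Delta v = 2e^v$, corresponding to a conformal metric of constant Gaussian curvature $-1$. The hyperbolic metric on $D_R$ gives the explicit real-analytic solution $v=\log\frac{4R^2}{(R^2-|w|^2)^2}$, from which
$$\tilde a = \log\frac{R}{2\,|h(w)|\,(R^2-|w|^2)},\qquad a = e^{\tilde a},$$
both real-analytic and strictly positive on $D_R$. Next I recover a real-analytic $b$ by solving the $\bar\partial$-equation $b_{\bar w}=e^{2\tilde a}\overline{h}$ on the simply connected disc $D_R$, set $c\equiv 0$ so that the third equation of \eqref{ma1} is trivially satisfied, and finally solve the Poisson-type equation $d_{\bar w w}=e^{-\tilde a}$ for a real-analytic $d$.

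Assembling $(a,b,c,d)$ into \eqref{smae} produces a real-analytic plurisubharmonic solution $u$ of \eqref{mae} on $\mathbb C\times D_R$ with $b_{\bar w}$ nowhere zero, and hence, by the pointwise curvature identity recorded above, with nowhere flat K\"ahler metric. Infinitely many distinct solutions are obtained by letting $h$ range over nowhere zero holomorphic functions on $D_R$; for example, $h=e^{P(w)}$ for arbitrary holomorphic polynomials $P$ visibly alters both $a$ and $b$. The single step requiring genuine attention is ensuring that $\tilde a$ stays smooth on the whole of $D_R$, which is precisely what is secured by demanding that $h$ be nowhere vanishing; the remaining verifications are routine manipulations already guaranteed by the derivation of the system \eqref{ma1}.
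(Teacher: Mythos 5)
Your proposal is correct and arrives at the same pointwise criterion $R^{1}_{1\bar 1 1}=4|b_{\bar w}|^2$ used implicitly in the paper's argument; the construction differs from the paper's in how the first two equations of \eqref{ma1} are solved. The paper observes that for $\tilde a=\ln\frac{|f'|}{1-|f|^2}$ (a conformal map $f:D_R\to D_1$ with $f'\neq 0$) one may take $b_{\bar w}=\tfrac12 e^{2\tilde a}$, so the ``antiholomorphic factor'' $h$ is held constant and the infinite family is generated by varying $f$. You instead hold the Liouville solution fixed (the Poincar\'e metric on $D_R$ itself) and vary the nowhere-vanishing holomorphic factor $h$ in $b_{\bar w}=e^{2\tilde a}\bar h$; this is a complementary slice of the same two-parameter solution space (holomorphic $h$, conformal data for $\tilde a$). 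Your version has the small advantage of making the structural reduction explicit — the second equation forces $e^{-2\tilde a}b_{\bar w}$ to be antiholomorphic, and the first then becomes a Liouville equation $\Delta v=2e^v$ after absorbing the harmonic function $\log|h|^2$ — while the paper's version is more economical because it directly inherits real-analyticity and convergence on all of $D_R$ from the one-parameter Poincar\'e pullback. One point you leave implicit but should state: term-by-term integration of the Taylor expansion of $b_{\bar w}=\frac{R^2}{4h(R^2-|w|^2)^2}$ (and similarly of $d_{\bar w w}=e^{-\tilde a}$) converges on all of $D_R$ because $1/h$ is holomorphic on $D_R$ and $(R^2-|w|^2)^{-2}$ has a power series in $|w|^2$ convergent for $|w|<R$; the paper makes this convergence argument explicitly. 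Also worth noting: strict plurisubharmonicity follows, as in Theorem \ref{mat}, from $a>0$ and $\det(\partial\bar\partial u)=1$ for the $2\times 2$ Hessian, which you should record rather than assert.
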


\begin{proof}
    Given  any holomorphic function $f$ such that $|f|<1$  and $|f'|\ne 0$ on $D_R$, let 
$$\tilde a  = \ln \frac{|f'|}{ 1-|f|^2 }\ \ \text{on}\ \ D_R.$$ 
For example, one can choose $f$ to be any conformal map from $D_R$ into $D_1$. Apparently,  $$a= e^{\tilde a} = \frac{|f'|}{1-|f|^2}$$ is real-analytic on $D_R$.  

Let  $b$ solve  
$$   b_{\bar w}= \frac{|f'|^2}{2(1-|f|^2)^2} \ \ \text{on}\ \ D_R. $$ 
Indeed, since the right hand side has a Taylor expansion at $0$ with radius of convergence $R$, one can solve a $b$ by term-by-term integration of the  expansion at $0$ using the following general formula: 
$$ b = \sum_{i+j=0}^\infty \frac{b_{ij}}{j+1}w^{i}\bar w^{j+1} \ \ \text{solves}\ \  b_{\bar w} = \sum_{i+j=0}^\infty b_{ij}w^i\bar w^j $$
on its domain of convergence. Consequently, the Taylor expansion of $b$ has radius of convergence $R$. 
Moreover, one can directly verify that the pair $(\tilde a, b)$ satisfies the first two equations in \eqref{ma1}. Then as in the proof of Theorem \ref{mat}, we further let $c$ be any holomorphic function so \eqref{c} is satisfied, and  solve \eqref{d} for the Taylor expansion of  $d$ on $D_R$ (again, by term-by-term integration). Altogether, the function $u$ defined in \eqref{smae} with these coefficients  $a, b, c$ and $ d$  is a real-analytic    plurisubharmonic  solution to \eqref{mae} on $\mathbb C\times D_R$. With such chosen $u$, the corresponding K\"ahler metric is nowhere flat according to (the proof of) Theorem \ref{obp}.  
\end{proof}

Following the proof of Theorem \ref{cf}, let us produce a concrete real-analytic solution to \eqref{mae} on $\mathbb C\times D_R$. 
\begin{example}\label{ex}
Letting $f(w)= w/R$ on $D_R$, then $f'\ne 0, |f|<1$ on $D_R$. Following  the  proof of Theorem  \ref{cf}, we have 
   $$a = e^{\tilde a} = \frac{R}{R^2-|w|^2}, \ \ b_{\bar w} = \frac{R^2}{ 2(R^2-|w|^2)^2}.$$
In particular, in terms of Taylor expansion at $w=0$, $b_{\bar w} = \sum_{n=0}^\infty \frac{(n+1)|w|^{2n}}{ 2R^{2n+2}} $. Thus the term-by-term integration gives a solution $$b = \sum_{n=0}^\infty \frac{w^{n}\bar w^{n+1}}{2R^{2n+2}} = \frac{\bar w}{ 2( R^2-|w|^2) }.$$
Further letting   $c = 0 $ so  that \eqref{c} is automatically satisfied. Finally we solve $d$  so that \eqref{d} holds, which in this case becomes  
$$  d_{\bar ww} = e^{-\tilde a} = \frac{R^2-|w|^2}{R}. $$
One such $d$ is $$d =R|w|^2 - \frac{|w|^4}{4R}.$$ Altogether, we obtain a solution to \eqref{mae} on $\mathbb C\times D_R$:
\begin{equation}\label{co}
    u(z, w)  = \frac{R|z|^2}{R^2-|w|^2} +  \frac{\bar wz^2}{2( R^2-|w|^2)}+ \frac{  w\bar z^2}{ 2(R^2-|w|^2) }  + R|w|^2 - \frac{|w|^4}{4R}, 
\end{equation} 
whose induced K\"ahler metric is nowhere flat since $R_{111}^1 = 4|b_{\bar w}|^2\ne 0$ on $\mathbb C\times D_R$.

It is worth noting that   the metric given by the complex Hessian of $u$ in \eqref{co} is not complete. Indeed, the path $\gamma(t): = (0, t), 0<t<R$  originates from the origin with the initial velocity $(0, 1)$, and  approaches the boundary point $(0,R)$. However, the length of  $\gamma $ with respect to $g: =\partial\bar\partial u$ is 
$$ \int_0^R \|\dot{\gamma}\|_gdt = \int_0^R\sqrt{2 g_{\bar ww}|_{\gamma}}dt =  \int_0^R\sqrt{2\left(R-\frac{t^2}{R}\right) }dt= \frac{\sqrt{2}}{4} \pi R^{\frac{3}{2}} <\infty.$$
By the Hopf-Rinow Theorem, it is not complete. 

%$$ \sum_{\alpha\ge 1, \beta\ge 1}^\infty \alpha\beta c_{\alpha\beta} w^{\alpha-1} \bar w^{\beta-1}  = \left(\sum_{n=1}^\infty \frac{w^{n-1}\bar w^n}{R^{2n}}\right) \left(  \sum_{\alpha\ge 0, \beta\ge 1}^\infty \beta c_{\alpha\beta}w^\alpha \bar w^{\beta-1} \right) + \left(-\ln R +\sum_{n=1}^\infty \frac{w^{n}\bar w^n}{nR^{2n}}\right)\left(  \sum_{\alpha\ge 0, \beta\ge 1}^\infty \beta \overline{c_{\alpha\beta}}\bar w^\alpha  w^{\beta-1}  \right) $$
%Too complicated...
\end{example}

In particular, Calabi's question fails for  any bounded domain in $\mathbb C^2$, by simply restricting the solutions in Theorem \ref{cf} on this domain. 

\begin{cor}
   For every bounded domain $\Omega\subset \mathbb C^2$, there exist    infinitely many real-analytic plurisubharmonic  solutions   to \eqref{mae} on $\Omega$ such that its induced K\"ahler metric is nowhere flat.  
\end{cor}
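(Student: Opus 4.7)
The plan is to obtain the corollary as a direct consequence of Theorem \ref{cf} via restriction. The essential observation is that any bounded $\Omega \subset \mathbb{C}^2$ sits inside a cylindrical domain of the form $\mathbb{C} \times D_R$, so solutions on the cylinder restrict to solutions on $\Omega$.

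First I would fix coordinates $(z,w) \in \mathbb{C}^2$ and note that the projection of the bounded set $\Omega$ onto the $w$-axis is bounded, hence contained in some disc $D_R$. This immediately gives the inclusion $\Omega \subset \mathbb{C} \times D_R$. Next, I would invoke Theorem \ref{cf} to produce infinitely many real-analytic plurisubharmonic solutions $u$ to \eqref{mae} on $\mathbb{C} \times D_R$ whose associated K\"ahler metrics $\partial\bar\partial u$ are nowhere flat. Since the equation \eqref{mae}, plurisubharmonicity, real-analyticity, and the vanishing/non-vanishing of the curvature tensor are all local conditions, the restriction $u|_\Omega$ is again a real-analytic plurisubharmonic solution to \eqref{mae} on $\Omega$ whose induced K\"ahler metric is nowhere flat on $\Omega$.

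The only point requiring any care is that the "infinitely many" property survives restriction. This follows from real-analyticity: if two such solutions $u_1$ and $u_2$ agreed on the open set $\Omega$, then by the identity principle they would agree on the connected component of $\mathbb{C} \times D_R$ containing $\Omega$, hence on all of $\mathbb{C} \times D_R$ by connectedness. Equivalently, one may note that the construction in the proof of Theorem \ref{cf} has infinitely many independent free parameters — for instance, infinitely many choices of the holomorphic function $c$, or different admissible $f$ producing inequivalent coefficients $a$ and $b$ — each of which yields genuinely distinct solutions on any open subset.

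There is no real obstacle in this argument; the mild subtlety is simply the restriction step, which is handled by the locality of all relevant conditions together with real-analytic continuation. This completes the proposed proof.
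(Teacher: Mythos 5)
Your proposal is correct and matches the paper's own (one-line) argument: since $\Omega$ is bounded it lies in some $\mathbb{C}\times D_R$, and one simply restricts the solutions produced by Theorem \ref{cf}. The extra remark about the "infinitely many" property surviving restriction via real-analytic continuation is a valid and harmless elaboration, but the paper treats this as immediate.
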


\section{Entire solutions with flat K\"ahler metrics}

In this section, we convert the attention to entire solutions. First we  construct ample entire real-analytic plurisubharmonic solutions to \eqref{mae} on $\mathbb C^2$ of the quadratic form \eqref{smae}. 

\begin{theorem}\label{mate}
Given any three entire holomorphic functions $h$, $f$ and $b$  in $w\in \mathbb C$, define $$a: = e^{2\text{Re}(h)  }$$ and let $c$ and $d$ be solutions to  
\begin{equation}\label{7}
    c_{\bar w} = a\bar f, \ \ d_{\bar w  w} =  a|f|^2 +\frac{1}{a} \ \ \text{on}\ \ \mathbb C. 
\end{equation}  Then the function $u$ defined in \eqref{smae} with the coefficients  $(a, b, c, d)$  given above is    an entire real-analytic plurisubharmonic  solution to \eqref{mae} on $\mathbb C^2$. Moreover, the induced K\"ahler metric  given by the solution is flat. 
\end{theorem}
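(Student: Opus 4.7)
The plan is to reduce the verification that $u$ solves \eqref{mae} to checking the system \eqref{ma1}, exploit the fact that the holomorphicity of $b$ kills the nonlinear couplings in the first three equations, then invoke global solvability on $\mathbb{C}$ for the two remaining linear prescriptions in \eqref{7}. Since $b$ is entire in $w$, we have $b_{\bar w}\equiv 0$, so the first two equations of \eqref{ma1} reduce to $\tilde a_{\bar w w}=0$ and $b_{\bar w w}=0$. Taking $\tilde a = 2\operatorname{Re}(h) = h+\bar h$ makes $\tilde a$ pluriharmonic on $\mathbb{C}$, so both are immediate; this is precisely the reduction to the system \eqref{ma2} already isolated in the proof of Theorem \ref{obp}.

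Next I would check that the prescriptions in \eqref{7} are exactly what is needed to close the remaining two equations of \eqref{ma1}. From $c_{\bar w} = a\bar f = e^{h+\bar h}\bar f$, a direct differentiation gives $c_{\bar w w} = h'\,e^{h+\bar h}\bar f = \tilde a_w c_{\bar w}$, which is the third equation of \eqref{ma1} once $b_{\bar w}=0$. Substituting $|c_{\bar w}|^2 = a^2|f|^2$ into the fourth equation gives $d_{\bar w w} = e^{-\tilde a}(a^2|f|^2+1) = a|f|^2 + 1/a$, matching \eqref{7}. Hence any $(c,d)$ satisfying \eqref{7} makes $(\tilde a,b,c,d)$ a solution of \eqref{ma1} on all of $\mathbb{C}$, and therefore $u$ from \eqref{smae} solves \eqref{mae} on all of $\mathbb{C}^2$.

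The main analytic point is the global, real-analytic solvability of the two equations in \eqref{7} on $\mathbb{C}$. The right-hand side of each is a real-analytic function whose convergent Taylor series at $0$ in the variables $w,\bar w$ has infinite radius of convergence (because $h$, $f$, $\bar h$, $\bar f$ are all entire). I would therefore construct $c$ and $d$ by term-by-term integration, exactly as in the proof of Theorem \ref{cf} and Example \ref{ex}: writing the right-hand side as $\sum a_{ij} w^i \bar w^j$, the formal series obtained by integrating once (or twice) in $\bar w$ converges on the same disc, and iterating this argument on arbitrarily large discs yields entire real-analytic $c$ and $d$. This is the only place where I need a bit of care, but it is essentially identical to the argument already used, and the free holomorphic additive ambiguities in $c$ and $d$ do not affect solvability of \eqref{ma1}.

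Finally, $u$ is real-analytic on $\mathbb{C}^2$ because $a$, $b$, $c$, $d$ are. For (strict) plurisubharmonicity, the $(1,\bar 1)$-entry of $\partial\bar\partial u$ is $a = e^{2\operatorname{Re}(h)}>0$, and the determinant equals $1$ by construction, so the $2\times 2$ Hermitian matrix $\partial\bar\partial u$ is positive definite everywhere. Flatness of the induced K\"ahler metric is then immediate from Theorem \ref{obp}: since $b$ is holomorphic, $b_{\bar w}\equiv 0$ on $\mathbb{C}$, and that theorem provides exactly the conclusion $R\equiv 0$.
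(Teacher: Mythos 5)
Your proof is correct and follows essentially the same route as the paper: reduce to the system \eqref{ma1} with $b_{\bar w}\equiv 0$, verify that $\tilde a = 2\operatorname{Re}(h)$ and the prescriptions in \eqref{7} close the system, construct $c$ and $d$ globally by term-by-term integration of Taylor series, and invoke the second half of the proof of Theorem \ref{obp} for flatness. You are a bit more explicit than the paper in unwinding why \eqref{7} is exactly equivalent to the last two equations of \eqref{ma1} and in checking strict plurisubharmonicity from $u_{z\bar z}=a>0$ together with $\det\partial\bar\partial u=1$, but the underlying argument is the same.
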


\begin{proof}
Since $h$ is holomorphic on $\mathbb C$, the function $a$ admits a Taylor expansion at $0$  with infinite radius of convergence. Consequently, both $a\bar f$ and $a|f|^2 +\frac{1}{a} $ also have    Taylor expansions at $0$  with infinite radius of convergence. Therefore, \eqref{7}   can be integrated    term-by-term using these   expansions, producing  Taylor expansions of $c$ and $d$  with infinite radius of convergence. This establishes the existence  of the coefficients $c$ and $d$ on $\mathbb C$. The fact that  these particular choices yield a solution to \eqref{mae} with a flat induced K\"ahler metric follows directly from the second part (when $b_{\bar w}\equiv 0$) in the proof to Theorem \ref{obp}. %First, such choices of  $(a, b, c, d)$ exist and are real-analytic on $\mathbb C$. One can directly verify that  $(\tilde a: = \ln a, b, c, d)$ satisfy the system \eqref{ma1},   or equivalently, the coefficients $(a, b, c, d)$ satisfy  the system \eqref{ma0}. Thus any such choice of $a, b, c, d$ makes  \eqref{smae} a solution to \eqref{mae}.     The curvature $R\equiv 0$ . 
\end{proof}

In particular, this theorem generalizes the examples by Warren \cite{Wa}, and Myga \cite[Proposition 1]{My}  where $h $ is holomorphic, and $b=c\equiv 0$, with flat induced  K\"ahler metrics. That is,  none of our examples  yields a counterexample  to Calabi' s question.

\begin{example}
    Letting $ h=w, f= e^{-w}$, and $b$ be any holomorphic function (say, $b\equiv 0)$ in Theorem \ref{mate}, one can see that 
    $$ u(z, w) = e^{2\text{Re}(w)}|z|^2 +   \bar we^{w} z + we^{\bar w}\bar z + e^{-2\text{Re}(w)} +|w|^2$$
    is an entire solution to \eqref{mae}. 
\end{example}

Theorem \ref{mate} has found a special class of entire solutions  to \eqref{mae} of the form \eqref{smae} where  Calabi's open question is true.   A natural question arises whether  Calabi's  question is true for  any entire solutions  of the form \eqref{smae}. Namely, 
 
\medskip

\noindent \textbf{Question:} Is  the K\"ahler metric given by the complex Hessian of any smooth solution to \eqref{mae} on $\mathbb C^2$ of the form \eqref{smae} necessarily flat? By Theorem \ref{obp}, this is further equivalent to asking: must every    smooth entire solution of the form \eqref{smae} satisfy the condition $b_{\bar w}\equiv 0$? 

%To answer this question,  one computes directly as follows 
 % \begin{equation*}
  %   \begin{split}
   %     \Gamma_{11}^1  = \frac{\partial g_{1\bar 1}}{\partial z}g^{\bar 1  1} + \frac{\partial g_{1\bar 2}}{\partial z}g^{\bar 2  1} = &  -2b_{\bar w}\left(a_{   w} z +2\bar b_{ w}\bar z + \bar c_{ w} \right);\\
    %    \Gamma_{12}^1  =   \frac{\partial g_{2\bar 1}}{\partial z}g^{\bar 1  1} + \frac{\partial g_{2\bar 2}}{\partial z}g^{\bar 2  1} = & a_{  w}\left( a_{\bar w w}|z|^2 + b_{\bar w w}z^2 +\overline{b_{\bar w w} z^2}+c_{\bar w w} z+\overline{c_{\bar w w}z }  +d_{\bar w w} \right) + \\ 
     %   &\left(a_{\bar w w}\bar z + 2b_{\bar w w}z  +c_{\bar w w}  \right)\left(-a_{   w} z -2\bar b_{ w}\bar z - \bar c_{ w}\right) \\
      %  =& \text{too complicated}  
       % \end{split}
 %\end{equation*}
\medskip

Although it remains unclear for the above question in full generality, we show that $b_{\bar w}$ must have zeros somewhere on $\mathbb C$, % there are no nowhere-flat solutions of the form \eqref{smae},
as a step toward addressing it. The proof  makes use of a non-existence result of entire solutions by Osserman \cite{Os}.

\begin{theorem}\label{cal}
For every  $C^2$-smooth entire solution  to \eqref{mae} on $\mathbb C^2$ of the form \eqref{smae},  the zero set $Z_b: =\{w\in \mathbb C:  b_{\bar w}(w)=0\}$ of $b_{\bar w}$ must be nonempty. %the corresponding K\"ahler metric is non-flat at every point.  
\end{theorem}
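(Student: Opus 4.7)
The plan is a proof by contradiction. If $b_{\bar w}\equiv 0$ then trivially $Z_b=\mathbb C$, so I may assume $b_{\bar w}\not\equiv 0$ and suppose toward a contradiction that $b_{\bar w}$ vanishes nowhere on $\mathbb C$. Under this hypothesis I will reduce the first two equations of \eqref{ma1} to a global Liouville-type equation $\Delta v = 32\,e^v$ on $\mathbb C\cong\mathbb R^2$, for which the Keller--Osserman obstruction of \cite{Os} yields the desired contradiction. (A standard elliptic bootstrap applied to \eqref{ma1} upgrades the $C^2$ hypothesis to $C^\infty$, so all derivatives used below are legitimate.)

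The key structural step is extracted from $b_{\bar w w}=2\tilde a_w b_{\bar w}$ by rewriting
\begin{equation*}
\partial_w\bigl(e^{-2\tilde a}\,b_{\bar w}\bigr)\;=\;e^{-2\tilde a}\bigl(b_{\bar w w}-2\tilde a_w\,b_{\bar w}\bigr)\;=\;0.
\end{equation*}
Hence $e^{-2\tilde a}\,b_{\bar w}$ is antiholomorphic in $w$, so there is an entire function $\phi$ with $b_{\bar w}=e^{2\tilde a}\,\overline{\phi}$ on $\mathbb C$. The assumption $Z_b=\emptyset$, combined with $e^{2\tilde a}>0$, forces $\phi$ to be nowhere vanishing, and we may write $\phi=e^{H}$ for some entire $H$; in particular $|\phi|^2=e^{2\text{Re}(H)}$.

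Feeding this factorization into the first equation of \eqref{ma1} gives $\tilde a_{\bar w w}=4e^{-2\tilde a}|b_{\bar w}|^2=4e^{2\tilde a}|\phi|^2$. Setting $v:=2\tilde a+2\text{Re}(H)$, so that $e^v=e^{2\tilde a}|\phi|^2$, and using that $\text{Re}(H)$ is harmonic, I obtain
\begin{equation*}
\Delta v\;=\;4\,v_{w\bar w}\;=\;8\,\tilde a_{w\bar w}\;=\;32\,e^v\qquad\text{on }\mathbb R^2.
\end{equation*}
The function $f(v)=32\,e^v$ is positive, nondecreasing, and satisfies the Keller--Osserman condition $\int^{\infty}ds/\sqrt{F(s)}<\infty$ with $F(s)=32(e^s-1)$, so Osserman's theorem rules out any $C^2$ entire solution of $\Delta v\ge e^v$ on $\mathbb R^2$. (If needed, one first passes to the radial spherical average of $v$, which is monotone by subharmonicity and hence bounded below, so Osserman applies in its standard form.) This contradicts the global existence of our $v$, and therefore $Z_b$ must be nonempty.

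The only nontrivial idea is spotting the factorization $b_{\bar w}=e^{2\tilde a}\,\overline{\phi}$ with $\phi$ entire, which linearises the problem and exposes the hidden Liouville structure of \eqref{ma1}. The no-zero hypothesis on $b_{\bar w}$ enters in exactly one place: it lets $\phi$ be written globally as $e^{H}$ with $H$ entire, so that $v=2\tilde a+2\text{Re}(H)$ is defined on all of $\mathbb C$. Without this, one is left only with a local logarithm and Osserman's global non-existence cannot be invoked.
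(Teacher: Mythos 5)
Your proof is correct and follows essentially the same route as the paper: extract the anti-holomorphic integrating factor $e^{-2\tilde a}b_{\bar w}$ from the second equation of \eqref{ma1}, use the nowhere-vanishing hypothesis to write it as the exponential of an entire (conjugate-)holomorphic function, absorb that harmonic logarithm into $\tilde a$ to produce a global Liouville equation, and invoke Osserman's non-existence theorem. The paper sets $v=\ln(4|h|)+\tilde a$ and obtains $\Delta v=e^{2v}$, while you set $v=2\tilde a+2\,\text{Re}(H)$ and obtain $\Delta v=32\,e^{v}$; these differ only by an affine change of the unknown, and both satisfy the Keller--Osserman hypotheses, so the two versions are interchangeable.
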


\begin{proof}
Assume by contradiction that there exists a $C^2$  solution   of the form \eqref{smae} such that %whose corresponding K\"ahler metric is non-flat everywhere.  According to Theorem  \ref{obp}, 
$b_{\bar w}$ is nowhere zero on $\mathbb C$.  Let us revisit the original system \eqref{ma1}. Letting $h: = e^{-2\tilde a}{b_{\bar w}}$ and making use of the equation $b_{\bar w w} = 2\tilde a_wb_{\bar w} $, one has $h$ satisfies
$$  h_w = e^{-2\tilde a}{b_{\bar ww}} - 2\tilde a_w e^{-2\tilde a}b_{\bar w} = e^{-2\tilde a} 2\tilde a_wb_{\bar w} - 2\tilde a_w e^{-2\tilde a}b_{\bar w} =0\ \ \text{on}\ \ \mathbb C.$$
Namely, $h$ is    anti-holomorphic on $\mathbb C$, with  \begin{equation}\label{bh}
b_{\bar w} = e^{2\tilde a}h\ \ \text{on}\ \ \mathbb C. \end{equation}
Moreover, since $b_{\bar w}\ne 0$,   $ h$ is nowhere zero on $\mathbb C$.
%In particular, $h\ne 0$ on $\mathbb C$. Therefore, there exists a holomorphic function ???

%Consider a transformation $w: = w(\zeta)$ such that $w_\zeta = \frac{1}{h}$ on $\mathbb C$.

Plugging it to the equation for $\tilde a$, we obtain
 \begin{equation}\label{os}
     \tilde a_{\bar w w} = 4|h|^2e^{2\tilde a}\ \ \text{on}\ \ \mathbb C. 
 \end{equation}
We claim there is no entire solution to \eqref{os}. If not, since $h$  is nowhere zero on $\mathbb C$,   $\ln (4|h|) $ is harmonic on $\mathbb C$, and thus $v: =  \ln (4|h|)  + \tilde a $ is a  $C^2$ solution to
\begin{equation}\label{os1}
  \Delta   v = e^{2v} \ \ \text{on}\ \ \mathbb C. 
\end{equation}
%Clearly, $ \Delta   v = e^{2v}  $ on $\mathbb C\setminus Z_h$, where $Z_h: = \{ z\in \mathbb C: h =0\}$. Due to the anti-holomorphy of $h$,  $Z_h $ is either $ \emptyset$, or $\mathbb C$,  or isolated.  $Z_h=\mathbb C$ is reduced to the case in Theorem \ref{mate}.  
%Assume $Z_h=\emptyset$. 
However, we recall   a nonexistence result of Osserman \cite{Os} for solutions to  $ \Delta u = f(u)$ on $\mathbb C$: if $f>0, f'\ge 0$ on $\mathbb R$ and  
 $$  \int_{0}^\infty\left(\int_0^t f(s)ds\right)^{-\frac{1}{2}}dt <\infty,$$ 
 then there is no $C^2$ solution to $ \Delta u = f(u)$ on $\mathbb C$.
One can check that   $f(s): = e^{2s}$ in our case satisfies the above assumptions for $f$.  From this we immediately obtain the nonexistence of entire solutions to \eqref{os1}. But this is  a contradiction! The claim is thus proved, and so is the theorem. 
%In the case when $Z_h$ is isolated, near every point $z_0\in Z_h$, we shall show that $v$ is a weak solution in a neighborhood $U$ of $z_0$. Indeed, ???
\end{proof}

Unfortunately the approach used in the proof of Theorem \ref{cal} no longer works  if the function $h$ there has   zeros. This is because  the singularity of $v$ cannot  be resolved at the zeros of $h$ to yield  a (weak) solution to \eqref{os1} everywhere.    In the Appendix we will discuss several   cases where the isolated singularities can, in fact, be resolved.     On the other hand, note that due to the holomorphy of $h$, the zero set $Z_b$ of $b_{\bar w}$, if not empty, is either isolated or the whole $\mathbb C$ from \eqref{bh}. Therefore    to answer \textbf{Question}, by Theorem \ref{cal} it suffices to consider the case when $Z_{b}$ is isolated.

\section{Solutions that are radially symmetric  on  $w$}

In this section, we investigate  solutions to \eqref{mae} that depends quadratically on the variable $z$ and  exhibit radial symmetry in the  variable $w$. More precisely, the solutions are of the form \eqref{smae}, where $a, b, c$ and $ d$ depend solely on  $|w|$. In the following, we   derive   explicit formulas for all these functions. Interestingly, such solutions exhibit a wide range of  singular behaviors.
\medskip

Letting $t: = \log |w|^2$, we shall explore all possible forms of expressions of $a, b, c$ and $ d$ in terms of the variable $t$.  For a  radial function $h $ of the variable $w$, by the chain rule,  $h_{w} = \frac{h_t}{  w}$, $h_{\bar w} = \frac{h_t}{\bar w}$, and $h_{\bar ww} = \frac{h_{tt}}{|w|^2}$  away from $w=0$. Here $h_t$ denotes the derivative of $h$ with respect to $t$. In view of this,   \eqref{ma0} can be reduced to a system of ordinary differential equations:
\begin{equation} \label{re}
   \left\{  \begin{aligned}
       &a a_{tt} = |a_t|^2 + 4|b_{t}|^2;\\
       &ab_{tt} = 2a_tb_{t};\\
       &ac_{tt} = a_tc_t+2b_t\bar c_t;\\
       &ad_{tt} = |c_t|^2+ e^t.
    \end{aligned}\right.
\end{equation}

To  solve the system, let $\tilde b: = b_{t}$, so the second equation in \eqref{re} becomes 
$$ a\tilde b_t = 2a_t\tilde b. $$
Separating the two functions and then taking integration on both sides, one gets 
\begin{equation}\label{tb}
    \tilde b \ (=  b_{t}) =  ka^2 
\end{equation} 
for any complex constant $k$. 

Plugging \eqref{tb} into the first equation in \eqref{re}, we  seek  solutions to  
\begin{equation}\label{ar}
     a a_{tt} - a_t^2 = 4 |k|^2a^4.
\end{equation} 
  Let \( v: = \frac{a_t}{a} \), or equivalently
  \begin{equation}\label{ar1}
      a_t = av. 
  \end{equation} Then $
v_t =\frac{a a_{tt} - a_t^2}{a^2}$, and  
    \eqref{ar} can be simplified to a first-order ODE in terms of \( v \):
\begin{equation}\label{ar2}
    v_t= 4 |k|^2a^2
\end{equation}
Now that  a coupled system \eqref{ar1}-\eqref{ar2} is  established about $(a, v)$,   by separating  the variables, we get
 $$ v\ dv = 4|k|^2a\ da. $$
 %$$ \frac{dv}{da} = \frac{v_t}{a_t} = \frac{4 |k|^2a^2}{ av} = \frac{4 |k|^2a}{v},$$ 
 %this further  reduces the system  to a simpler ODE between \( v \) and \( a \): $ \frac{dv}{da} = \frac{4 |k|^2a}{v}. $
Integrating both sides, one can eventually obtain 
$$ 
v^2 = 4 |k|^2a^2 + C_1,
$$
for any real constant  \(C_1\) (since $a$ and  $v$ are real) such that the right hand side is nonnegative. Recalling that $ v = \frac{a_t}{a} $, hence 
\begin{equation} \label{att}
    a_t  = \pm a\sqrt{4 |k|^2a^2 + C_1}  
\end{equation}
for any real constant $C_1$ with $4 |k|^2a^2 + C_1\ge 0 $, and for the constant $k$ chosen in \eqref{tb}. Depending on the value  of $k$, and then that of $C_1$, we explore all possible expressions of $a, b, c$ and $d$ below.
\medskip

\textbf{Case I: $k=0$.}   A straight forward computation from \eqref{att} and \eqref{tb} gives  
\begin{equation}\label{ak0}
    a = C_1e^{C_2t}, \ \ b = C_3
\end{equation}
for any real constants $C_1>0$ (since  $a>0$)  and $ C_2$,  and any complex constant $C_3$. Plugging them into the third and fourth equations in \eqref{re}, we have
\begin{equation}\label{ak01}
\begin{split}
    c&= C_4e^{C_2t}+C_5,  \\
    d&= \begin{cases}
        \dfrac{|C_4|^2e^{C_2t}}{C_1} +\dfrac{e^{(1-C_2)t}}{(1-C_2)^2C_1} + C_6t+C_7, &  C_2\ne 1,\\
         \dfrac{|C_4|^2e^{C_2t}}{C_1} + \dfrac{t^2}{2C_1}+C_6t+C_7, & C_2 = 1,
    \end{cases}  
\end{split}
\end{equation}
where $C_1>0$, $C_2, C_6$ and $C_7$ are any real constants, and $C_3, C_4$ and $C_5$ are any complex constants.

\medskip

\textbf{Case II: $k\ne 0$.}  Rewrite  \eqref{att} to be
\begin{equation}\label{se}
    a_t  = \pm 2|k| a\sqrt{a^2 + C_1},
\end{equation}
where $C_1$ is any real constant such that $a^2+C\ge 0$. Separate the variables in \eqref{se} and then integrate both sides to get
 \begin{equation}\label{im}
     \pm \int \frac{da}{a\sqrt{a^2 + C_1}} = 2|k|t+C   
 \end{equation} 
 for any real constant  $C$. Depending on the sign of $C_1$,  there are three cases   for the integral on the left hand side of \eqref{im}:
 \begin{equation*}
 \begin{split}
     \int \frac{da}{a\sqrt{a^2 + C_1}} =  \left\{  \begin{aligned} & \frac{1}{\sqrt{C_1}}\ln\left(\frac{\sqrt{a^2+C_1} -\sqrt{C_1}}{a}\right)+C, & \ \ \ C_1>0;\\
     & -\frac{1}{a} +C, &\ \ \ C_1=0;\\
   &     \frac{1}{\sqrt{-C_1}}\sec^{-1}\left(\frac{a}{\sqrt{-C_1}}\right) +C, &\ \ \ C_1<0.
    \end{aligned} \right.
 \end{split}
 \end{equation*}
 Combining these formulas with \eqref{im}, 
we can solve $a$ explicitly:  
\begin{equation}\label{ae}
\begin{split}
         a \ (\text{or}\ -a) = \begin{cases}
        \dfrac{2\sqrt{C_1}C_2e^{2 |k|\sqrt{C_1}t}}{1-C_2^2e^{  4 |k|\sqrt{C_1}t}}, \ \ \ &C_1>0;\   \\
        -\dfrac{1}{2 |k|t + C_2}, \ \ \ &C_1=0; \\
         \sqrt{-C_1}  \sec\left( 2|k|\sqrt{-C_1} t + C_2  \right), \ \ \ &C_1< 0,
    \end{cases}
    \end{split}
\end{equation}
where  $C_2$ is an arbitrary real constant. The domain of definition for $a$ is wherever   the expression on the  right hand side is defined  such that  $a>0$. 

Next we solve for $b$. Recalling that $ b_t =   k a^2 $ and combining it with the expression \eqref{ae} of $a$,  one can immediately obtain    
\begin{equation}\label{br}
      b = \begin{cases}
      \dfrac{\sqrt{C_1}k}{|k|(1 - C_2^2 e^{4 |k|\sqrt{C_1} t})} + k_0, &  C_1>0; \\
    - \dfrac{k}{2|k|(2 |k|t + C_2)} + k_0, & C_1=0; \\
      \dfrac{k\sqrt{-C_1}}{2|k|} \tan\left(2 |k|\sqrt{-C_1}   t +  C_2\right) + k_0, & C_1<0,
\end{cases}
\end{equation}
 where  $C_2$ is the  real constant chosen in \eqref{ae}, and $k_0$ is an arbitrary complex constant.

 Plugging expressions of $a$ and $b$ (in particular, \eqref{tb} and \eqref{se}) into the third   equation  in \eqref{re} for $c$, we further solve
 $$  c_{tt} = \pm 2|k|\sqrt{a^2 + C_1}  c_t+2  ka \bar c_t.  $$
  Writing $c = c_1+ic_2$, where $c_1, c_2$ are real functions, and $k = k_1+ik_2$, where $k_1, k_2$ are real constants,  and separating the real parts from the imaginary parts in the above expression, we get %the third   equation  in \eqref{re} for $c$ becomes
 \begin{equation}\label{coup}
     \begin{split}
      \begin{cases}   (c_1)_{tt} = \pm 2|k|\sqrt{a^2 + C_1}  (c_1)_t+2 a\left( k_1  (c_1)_t + k_2(c_2)_t \right);\\
         (c_2)_{tt} = \pm 2|k|\sqrt{a^2 + C_1}  (c_2)_t+2 a\left( k_2  (c_1)_t - k_1(c_2)_t \right). \end{cases}
     \end{split} 
 \end{equation}
 This can be rewritten in a matrix form as follows:
\[
  \begin{pmatrix} (c_1)_{tt} \\ (c_2)_{tt} \end{pmatrix} = A   \begin{pmatrix} (c_1)_t \\ (c_2)_t \end{pmatrix},
\]
where
\[
A  = \pm2|k| \sqrt{a^2 + C_1}   I_2 + 2 a K, \ \text{with}\ \quad I_2 = \begin{pmatrix} 1 &  0\\ 0 & 1 \end{pmatrix} \ \text{and}\ \quad K = \begin{pmatrix} k_1 & k_2 \\ k_2 & -k_1 \end{pmatrix}.
\]
Note that the constant matrix \( K \) has eigenvalues \( \lambda = \pm \sqrt{k_1^2 + k_2^2} =\pm |k| \). So there exists a constant unitary matrix \( P \)   that  diagonalizes \( K \):
\[
K = P \begin{pmatrix} |k|  & 0 \\ 0 & -|k| \end{pmatrix} P^{-1}.
\]
Defining the  new variables \( \begin{pmatrix} w_1 \\ w_2 \end{pmatrix} = P^{-1} \begin{pmatrix} c_1 \\ c_2 \end{pmatrix} \), the system can be  decoupled into:
\[
\begin{cases}
(w_1)_{tt} = 2|k|\left(\pm\sqrt{a^2 + C_1} +   a  \right) (w_1)_t; \\
(w_2)_{tt} = 2|k|\left(\pm\sqrt{a^2 + C_1} -   a   \right) (w_2)_t.
\end{cases}
\]
Thus, without loss of generality, let us assume   for  simplicity that  $k$ is a positive constant and the "$+$" sign  in \eqref{coup} is taken. It then can be   rewritten as  %Writing $c = c_1+ic_2$, where $c_1, c_2$ are real functions, and separating the real parts from the imaginary parts, the third   equation  in \eqref{re} for $c$ becomes 
 \begin{equation}\label{ctt}
     \left\{\begin{aligned}
         (\ln |(c_1)_t|)_t &=  2k(\sqrt{a^2 + C_1}  +a);\\
         (\ln |(c_2)_t|)_t &=  2k(\sqrt{a^2 + C_1}  - a).
     \end{aligned}\right.
 \end{equation} 

Making use of \eqref{ae}, which further gives  
\begin{equation*}\label{ae1}
\begin{split}
        \sqrt{a^2+ C_1}  = \begin{cases}
        \dfrac{\sqrt{C_1}(1+ C_2^2e^{4 |k|\sqrt{C_1}t})}{1-C_2^2e^{  4 |k|\sqrt{C_1}t}}, %= \sqrt{C_1}\left( 1+ \dfrac{2C_2^2e^{4 |k|\sqrt{C_1}t}}{1-C_2^2e^{  4 |k|\sqrt{C_1}t}}\right), 
        \ &C_1>0;\\
        -\dfrac{1}{2 |k|t + C_2},  \ & C_1= 0;\\
        \sqrt{-C_1} \tan\left( 2\sqrt{-C_1} |k|t + C_2  \right), \  & C_1< 0, 
    \end{cases}
    \end{split}
\end{equation*}
we  take integration directly on both sides of \eqref{ctt} to have
\begin{equation}\label{c1t} 
(c_1)_t =
\begin{cases}
    \displaystyle   \dfrac{C_{3, 1}e^{2k\sqrt{C_1}t}}{\left(1 - C_2 e^{2k\sqrt{C_1}t}\right)^2} ; \\ 
   \displaystyle  \dfrac{C_{3, 1}}{(2kt + C_2)^2} ;  \\ 
  \displaystyle    \dfrac{C_{3, 1}}{ 1 -\sin \left(2\sqrt{-C_1}|k|t+ C_2\right) },
\end{cases}  (c_2)_t =
\begin{cases}
    \displaystyle    \dfrac{C_{3, 2}e^{2k\sqrt{C_1}t}}{\left(1 + C_2 e^{2k\sqrt{C_1}t}\right)^2}  , & C_1 > 0; \\ 
  \displaystyle  C_{3, 2}, & C_1 = 0;  \\ 
  \displaystyle  \dfrac{C_{3, 2}}{ 1 +\sin \left(2\sqrt{-C_1}|k|t+ C_2\right) }, & C_1 < 0,
\end{cases}
\end{equation}
where $C_{3,1}, C_{3,2}$ are any real constants. 

Integrating one more time in \eqref{c1t}, we get
\begin{equation*} 
c_1 =
\begin{cases}
    \displaystyle   \dfrac{C_{3, 1}}{2k\sqrt{C_1}C_2(1 - C_2 e^{2k\sqrt{C_1}t})} + C_{4,1}, & C_1 > 0; \\ 
   \displaystyle  - \dfrac{C_{3, 1}}{2k(2kt + C_2)} +C_{4,1}, & C_1= 0;  \\ 
  \displaystyle  \frac{C_{3, 1}}{2\sqrt{-C_1}k} \left( \sec\left(2\sqrt{-C_1}kt+ C_2\right) + \tan\left(2\sqrt{-C_1}kt + C_2 \right) \right) + C_{4,1}, & C_1 < 0,
\end{cases}
\end{equation*}
and 
\begin{equation*} 
c_2 =
\begin{cases}
    \displaystyle  -  \dfrac{C_{3, 2}}{2k\sqrt{C_1}C_2 (1 + C_2 e^{2k\sqrt{C_1}t})} +C_{4, 2}, & C_1 > 0; \\ 
  \displaystyle  C_{3, 2}t + C_{4, 2}, & C_1 = 0;  \\ 
  \displaystyle  -\frac{C_{3, 2}}{2\sqrt{-C_1}k} \left( \sec\left(2\sqrt{-C_1}kt+ C_2\right)  - \tan\left(2\sqrt{-C_1}kt + C_2 \right)\right) + C_{4, 2}, & C_1 < 0,
\end{cases}
\end{equation*}
where $C_{4, 1}, C_{4, 2}$ are any real constants. 
Thus
\begin{equation}\label{cr}
c = c_1+i c_2 = 
\begin{cases}
    \displaystyle  \dfrac{ \overline{ C_3}+ C_3 C_2 e^{2k\sqrt{C_1}t}  }{2k\sqrt{C_1}C_2(1 - C_2^2 e^{4k\sqrt{C_1}t})}  + C_4 , & C_1 > 0; \\ 
   \displaystyle  -\dfrac{\text{Re}(C_3)}{2k(2kt + C_2)} +   i\text{Im}(C_3)t + C_4 , & C_1= 0;  \\ 
  \displaystyle \frac{\overline{ C_3}}{ 2\sqrt{-C_1}k} \sec\left(2\sqrt{-C_1}kt+ C_2\right) +\frac{C_3}{2\sqrt{-C_1}k} \tan\left(2\sqrt{-C_1}kt + C_2 \right) +C_4, & C_1 < 0, 
\end{cases}
\end{equation}
where $k>0$ and $C_2$ are defined in \eqref{ae} and \eqref{br}, and  $C_3: = C_{3,1}+ iC_{3,2}$ and $ C_4:  = C_{4,1}+iC_{4,2}$ are  arbitrary complex constants.

Finally, one can substitute the expression \eqref{ae} of  $a$, and \eqref{c1t} of $c_t$ into the fourth equation in \eqref{re} to solve for $d$:  
\begin{equation}\label{dr}
d = \begin{cases}
          \dfrac{e^{(1 - 2 k\sqrt{C_1})t}}{2\sqrt{C_1}C_2 (1 - 2 k\sqrt{C_1})^2} - \dfrac{C_2 e^{(1 + 2 k\sqrt{C_1})t}}{2\sqrt{C_1} (1 + 2 k\sqrt{C_1})^2}+\\
          +\dfrac{1}{8k^2 C_1^\frac{3}{2} C_2} \left( \dfrac{C_{3,1}^2}{1 - C_2 e^{2k\sqrt{C_1}t}}  
            -\dfrac{C_{3,2}^2}{1 + C_2 e^{2k\sqrt{C_1}t}}   \right)  + C_5 t + C_6, \ \ \ &C_1>0;\ \\
        - (2 k t + C_2 - 4 k) e^t   -\dfrac{C_{3,1}^2}{8k^2 (2kt + C_2)} - C_{3,2}^2\left(\dfrac{kt^3}{3} + \dfrac{C_2t^2}{2}\right)  + C_5 t + C_6,  &C_1 =0;\\
          \dfrac{e^t \left[(1+ 4k^2C_1 ) \cos\left(2 k\sqrt{-C_1}  t + C_2\right) + 4 k\sqrt{-C_1}  \sin\left(2 k \sqrt{-C_1} t + C_2\right) \right]}{\sqrt{-C_1}(1  - 4  k^2C_1)^2} +\\
      +  \dfrac{  
(C_{3,1}^2 - C_{3,2}^2) \tan\left(2k\sqrt{-C_1}t + C_2\right)
+ (C_{3,1}^2 + C_{3,2}^2) \sec\left(2k\sqrt{-C_1}t + C_2\right) }{4k^2C_1}  + C_5 t + C_6,  &C_1 =0,
    \end{cases}
 \end{equation}
for $k>0$,  $C_2$ and  $C_{3,1}, C_{3,2},  C_{4,1},  C_{4,2} $  defined in \eqref{ae}, \eqref{br} and \eqref{cr} with  $C_3 = C_{3,1}+ iC_{3,2}$ and $ C_4 = C_{4,1}+iC_{4,2}$, and any real constants $  C_5$ and $ C_6 $.

\medskip

Based on the above derivation for solutions to  \eqref{mae} that depend  radially on $w$, choosing different values for the parameters yields  the  following interesting examples with varying singularities. Note that the expressions for  $c$ do not introduce any additional singularities beyond those arising from $a$, $b$ or   $t = \ln|w|^2$.  For the purpose of studying singularity of solutions, we will consider the special cases when   $C_3  = C_4=0$, which significantly simplifies the expression for $d$.  We also let $k_0 = C_5 = C_6 =0$ below. 
%\begin{equation}\label{dr}
%d = \begin{cases}
 %         \dfrac{e^{(1 - 2 |k|\sqrt{C_1})t}}{2\sqrt{C_1}C_2 (1 - 2 |k|\sqrt{C_1})^2} - \dfrac{C_2 e^{(1 + 2 |k|\sqrt{C_1})t}}{2\sqrt{C_1} (1 + 2 |k|\sqrt{C_1})^2} + C_5 t + C_6, \ \ \ &C_1>0,\ \\
  %      - (2 |k| t + C_2 - 4 |k|) e^t + C_5 t + C_6,  &C_1 =0\\
   %       \dfrac{e^t \left[(1+ 4|k|^2C_1 ) \cos(2 |k|\sqrt{-C_1}  t + C_2) + 4 |k|\sqrt{-C_1}  \sin(2 |k| \sqrt{-C_1} t + C_2) \right]}{\sqrt{-C_1}(1  - 4  |k|^2C_1)^2}  + C_5 t + C_6, &C_1< 0
    %\end{cases}
 %\end{equation}
%for any real constants $C_2, C_5, C_6  $, and  an arbitrary complex constant $k$.

\begin{example}\label{ex1}
    Taking $k  = 0$ and $C_1 =1$, $C_2\ne 1$  in \eqref{ak0} and \eqref{ak01}, we obtain a solution  to \eqref{mae}:  
\begin{equation*}
    u =    |w|^{2C_2}|z|^2 +  (z^2 +\bar z^2) +   \frac{|w|^{2-2C_2}}{(1-C_2)^2}. 
\end{equation*}
Different choices of $C_2$ lead to solutions with different regularity. For instance, if $C_2=0$, then $u$ is a (smooth) quadratic function
\begin{equation*}
    u =    |z|^2 +  (z^2 +\bar z^2) +    |w|^{2}; 
\end{equation*}
if $C_2 = \frac{3}{2}$, then $u$ has singularity at $w=0$: 
\begin{equation*}
    u =   |w|^3 |z|^2 +  (z^2 +\bar z^2) +    \frac{4}{|w|}; %\ \ \text{on}\ \ \mathbb C\times (\mathbb C\setminus\{0\}); 
\end{equation*}
if $C_2 = \frac{1}{2}$, then $u$ is Lipschitz at $w=0$:
\begin{equation*}
    u =    |w||z|^2 +  (z^2 +\bar z^2) +    4|w|.
\end{equation*}
In particular, the last function  coincides with an example of B\l ocki \cite{Bl} and He \cite{He1} when $n=2$,   who showed it is both a pluripotential and viscosity solution.
\end{example}

\begin{example}
    Taking $k>0$ and $C_1= C_2= 1$   in \eqref{ae}, \eqref{br} and  \eqref{dr}, we obtain a solution  to \eqref{mae}:
$$ u=  \frac{2|w|^{4k}}{1-|w|^{8k}}|z|^2 + \frac{1}{1-|w|^{8|k|}}(z^2 +\bar z^2) +  \frac{|w|^{2-4k}}{2(1-2k)^2} - \frac{|w|^{2+4k}}{2(1+2k)^2}.  $$
Different choices of $k$ lead to solutions with different regularity. For instance, if $k=1$, then $u $ blows up at $w =0$ and $|w|=1$:
\begin{equation*}
    u = \frac{2|w|^4}{1-|w|^{8}}|z|^2 + \frac{1}{1-|w|^8}(z^2 +\bar z^2) +  \frac{1}{2|w|^2} - \frac{|w|^6}{18};\ % \ \text{on}\ \ \mathbb C\times (D_1\setminus\{0\});
\end{equation*}
if $k=\frac{1}{4}$, then $u $ is Lipschitz   at $w =0$, and  blows up at $|w|=1$:
\begin{equation*}
    u = \frac{2|w|}{1-|w|^2}|z|^2 + \frac{1}{1-|w|^2}(z^2 +\bar z^2) +   2|w| - \frac{2|w|^3}{9}, \ \ %\text{on}\ \ \mathbb C\times (D_1\setminus\{0\}).
\end{equation*}
which is a pluripotential and viscosity solution on $\mathbb C\times D_1$, similar  as in \cite{Bl} and \cite{He1}.
\end{example}

\begin{example}
    Taking $k  = 1$ and $C_1 = C_2 =0$ in \eqref{ae}, \eqref{br} and \eqref{dr}, we obtain a solution  to \eqref{mae}:
\begin{equation*}
    u =  - \frac{1}{2\ln(|w|^2)} |z|^2 -\frac{1}{4\ln(|w|^2)}  (z^2 +\bar z^2) -\left(2\ln(|w|^2)-4\right)|w|^2\ \ \text{on}\ \ \mathbb C\times D_1.  
\end{equation*}
This is the  example of a pluripotential and viscosity solution  given by Wang-Wang \cite{WW}.  % For instance, in \cite[Example 2.1]{WW}, Wang and Wang constructed a pluripotential and viscosity solution of the form \eqref{smae} on $\mathbb C\times D_1$, 
%$$ u(z, w) = -\frac{|\text{Re} z|^2}{\ln |w|} -2|w|^2(\ln|w|-1).$$
%This corresponds to the case in \eqref{smae} where $$a = -\frac{1}{2\ln|w|},\  b = -\frac{1}{4\ln|w|},\  c\equiv 0,\  d = -2|w|^2(\ln|w|-1).$$ According to \cite{WW}, 
This solution is in $W^{1, 2}_{loc}\cap W^{2,1}_{loc}$ but fails to be in $W^{1, p}_{loc}$ for any $p>2$, or in $W^{2, q}_{loc}$ for any $q>1$. Moreover, it is not even Dini continuous near $w= 0$. 
\end{example}

\begin{example}\label{ex2}
    Taking $k =-C_1 = 1$ and $C_2  =0$ in \eqref{ae}, \eqref{br} and  \eqref{dr}, we obtain a solution  to \eqref{mae}:
\begin{equation*}
    u =  \frac{1}{\cos(2\ln(|w|^2))} |z|^2 + \frac{\tan(2\ln(|w|^2))}{2}(z^2 +\bar z^2) +   \frac{|w|^2\left(-3\cos(2\ln(|w|^2)) + 4\sin(2\ln(|w|^2))\right)}{25}.
\end{equation*}
This solution  exhibits  singularities at $ |w| = e^\frac{\pi}{8}e^\frac{k\pi}{4}$ for each $ k\in \mathbb Z$. In particular, when $k\rightarrow -\infty$, the singularity set accumulates to $w=0$.  Moreover,  this solution is no longer plurisubharmonic   due to the frequent sign change. %In particular, near $|w|=0$, there exists 
\end{example}

 We conclude the section  with an example of a solution to \eqref{mae} whose singularity set is of  real codimension-one.

\begin{example}
 It is straightforward to see that    \begin{equation}
        u(z, w) =\begin{cases}
            |z-1|^2+|w|^2, & \text{Re} (z )\le \frac{1}{2};\\
             |z|^2+|w|^2, & \text{Re} (z )> \frac{1}{2} 
        \end{cases}
    \end{equation}
  solves \eqref{mae} on $\mathbb C^2\setminus X$, where $X: =\{ (z, w)\in \mathbb C^2: \text{Re} (z ) = \frac{1}{2}\}$ is a real hypersurface of real codimension-one. Note that since $u$ is is plurisubharmonic and continuous everywhere,  the fundamental result  \cite{BT} of Bedford and Taylor guarantees that  $(dd^c u)^2$ is well-defined as a positive Borel measure on $\mathbb C^2$, where   $d= \bar\partial +\partial$, $d^c = \frac{i}{2}(\bar\partial-\partial)$. In detail, letting   $dV$ be the volume form on $\mathbb C^2$, and $dS$ be the surface measure on $X$, then the measure is $$    (dd^c u)^2 =  4dV + 2\lambda_X     \ \ \text{on}\ \ \mathbb C^2, $$
  where   $\lambda_X$ is a Lelong current over $X$ defined by
  $ \lambda_X(\phi)  =\int_X\phi dS  $ for any testing function $\phi$. In particular, the example shows that the singularity of $u$ at $X$ is not removable for \eqref{mae}.
  \end{example}

\section{Rigidity of more general solutions}
In the previous sections, we considered solutions dependent quadratically on the variable $z$. A natural question is to consider a more general form of solutions by, say, replacing $z$ in \eqref{smae} by a holomorphic function $\phi$ of $z$. That is,
\begin{equation}\label{6}
     u(z, w) = a(w) |\phi(z)|^2 + b(w) \phi^2(z) +\overline{b(w) \phi^2(z)}+c(w)\phi(z)+\overline{c(w)\phi(z)}  +d(w). 
\end{equation}
 The complex Hessian of $u$ in \eqref{6}  becomes 
\begin{equation}\label{ch}
    \partial \bar\partial u  =  \begin{bmatrix}
u_{\bar z z} &  u_{z\bar w}\\
u_{\bar z w}  &    u_{\bar w w}
\end{bmatrix}  =  \begin{bmatrix}
 a|\phi'|^2 & \ \ a_{\bar w}\phi'\bar \phi + 2b_{\bar w}\phi'\phi + c_{\bar w}\phi' \\
a_{ w}\overline{\phi'} \phi + 2\bar b_{ w}\overline{\phi\phi' }+ \bar c_{ w}\overline{\phi'} & \ \ a_{\bar w w}|\phi|^2 + b_{\bar w w}\phi^2 +\overline{b_{\bar w w} \phi^2}+c_{\bar w w} \phi+\overline{c_{\bar w w}\phi }  +d_{\bar w w}
\end{bmatrix}.
\end{equation}
 Due to the strict plurisubharmonicity of $u$, one has $\phi'\ne 0$ necessarily. By further adjusting the coefficients $(a, b, c, d)$, we can assume that $\phi$  is normalized such that $$\phi(0) =0\ \ \text{and}\ \ \phi'(0)=1.$$ In order for \eqref{6} to solve \eqref{mae}, the following theorem demonstrates that the choice  of  $\phi$ takes  very rigid forms. 

\begin{theorem}\label{gen}
    Let $u$ be a smooth solution to \eqref{mae} of the form \eqref{6} for some holomorphic function $\phi$ of $z$ such that $\phi(0) =0$, $\phi'\ne 0$ and $\phi'(0)=1$, then either $$\phi = z\ \ \text{on}\ \ \mathbb C,$$
    or there exists some nonzero constant $\alpha $ such that  \begin{equation}\label{5}
         \phi =   \frac{1-\sqrt{1-2\alpha z}}{\alpha}\ \ \text{on}\ \ D_{\frac{1}{2|\alpha|} },
    \end{equation} 
    where  the complex  square root takes the principal branch. 
   \end{theorem}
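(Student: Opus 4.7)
\medskip
\noindent The plan is to reduce \eqref{mae} for $u$ of the form \eqref{6} to a rigid polynomial identity for $\phi$ via an explicit factoring of the Hessian determinant. The starting observation is that in \eqref{ch} each off-diagonal entry carries a factor of $\phi'$ or $\overline{\phi'}$, while the $(1,1)$ entry carries $|\phi'|^2$. A direct calculation (mimicking the one leading to \eqref{ma0}, but with $\phi$ in place of $z$) then produces
\begin{equation*}
\det(\partial\bar\partial u) \;=\; |\phi'(z)|^2\, Q\bigl(\phi(z),\overline{\phi(z)},w\bigr),
\end{equation*}
where $Q(\zeta,\bar\zeta,w)$ is a polynomial in $\zeta,\bar\zeta$ whose only possible monomials are $|\zeta|^2,\zeta^2,\bar\zeta^2,\zeta,\bar\zeta,1$, with coefficients $aa_{\bar w w}-|a_w|^2-4|b_{\bar w}|^2$, $ab_{\bar w w}-2a_wb_{\bar w}$, etc., exactly as on the right-hand side of \eqref{ma0}.

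Since $\phi'(0)=1$, the map $\phi$ is a local biholomorphism near $0$. I would substitute $\zeta:=\phi(z)$ and set $\psi:=\phi^{-1}$, so that \eqref{mae} transforms into the identity $|\psi'(\zeta)|^2=Q(\zeta,\bar\zeta,w)$ valid for all $(\zeta,w)$ in a neighborhood of the origin in $\mathbb C^2$. The crucial leverage is that the left-hand side is independent of $w$, while $Q$ has the rigid polynomial shape described above with smooth $w$-dependent coefficients. Differentiating in $w$ and using that a polynomial in $(\zeta,\bar\zeta)$ vanishing on the real two-dimensional diagonal slice $\{(\zeta,\bar\zeta):\zeta\in U\}$ must vanish identically (cleanest via Wirtinger derivatives at $\zeta=0$) forces every coefficient of $Q$ to be a constant in $w$. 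Hence $|\psi'(\zeta)|^2$ itself equals a polynomial in $(\zeta,\bar\zeta)$ of the prescribed restricted form.

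Next I would expand the holomorphic function $\psi'$ as a Taylor series $\psi'(\zeta)=\sum_{n\ge 0}a_n\zeta^n$ and multiply by its conjugate. The coefficient of $|\zeta|^{2n}$ in $|\psi'|^2$ is $|a_n|^2$, but the allowed monomials in $Q$ contain $|\zeta|^{2n}$ only for $n\in\{0,1\}$. Therefore $a_n=0$ for every $n\ge 2$, so $\psi'$ is a polynomial in $\zeta$ of degree at most $1$. The normalizations $\psi(0)=0$ and $\psi'(0)=1$ then give $\psi(\zeta)=\zeta-\tfrac{\alpha}{2}\zeta^2$ for some $\alpha\in\mathbb C$. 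Inverting this quadratic and selecting the branch sending $0$ to $0$ produces the two advertised cases: $\phi(z)=z$ when $\alpha=0$, and $\phi(z)=\alpha^{-1}\bigl(1-\sqrt{1-2\alpha z}\bigr)$ on $D_{1/(2|\alpha|)}$ when $\alpha\ne 0$.

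The step I expect to require the most care is the $w$-independence argument, because $\zeta$ and $\bar\zeta$ are complex conjugates rather than independent variables. One must verify that the coefficients of $Q$ are uniquely determined by the values of $Q$ on the totally real slice $\{(\zeta,\bar\zeta)\}$; this can be handled by writing $Q$ in real coordinates $\zeta=x+iy$ and invoking unique continuation for polynomials on an open subset of $\mathbb R^2$, or equivalently by extracting each coefficient through Wirtinger derivatives at the origin. Once this is justified, the remaining steps are purely algebraic.
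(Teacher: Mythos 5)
Your proof is correct, and it takes a genuinely different route from the paper's. The paper works entirely with $\phi$ in the $z$-variable: collecting Taylor coefficients of $1, z, z^2, |z|^2$ in \eqref{dd} yields the relations \eqref{aa}--\eqref{cc} identifying the $w$-coefficient expressions with constants depending on $\phi''(0)$ and $\phi'''(0)$; substituting back produces a self-contained functional equation \eqref{4} for $\phi$ alone, which is then solved by applying the Laplacian, observing that $g'/\phi'$ (with $g=1/\phi'$) is holomorphic of constant modulus, and invoking the Maximum Principle to deduce $\phi''=\phi''(0)(\phi')^3$. You instead change variables $\zeta=\phi(z)$ and work with the inverse $\psi=\phi^{-1}$: the identity $|\psi'(\zeta)|^2=Q(\zeta,\bar\zeta,w)$ together with the $w$-independence of the left side (justified, as you correctly flag, by extracting each of the six coefficients with Wirtinger derivatives at $\zeta=0$, since the allowed monomials are linearly independent) forces $|\psi'|^2$ to be a fixed polynomial in $(\zeta,\bar\zeta)$ of restricted bidegree. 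Reading off the coefficient $|a_n|^2$ of $|\zeta|^{2n}$ then kills all $a_n$ with $n\ge 2$ in one stroke, so $\psi'$ is affine and $\psi$ is the quadratic $\zeta-\tfrac{\alpha}{2}\zeta^2$; inverting recovers \eqref{5}, with $\alpha=-\psi''(0)=\phi''(0)$ matching the paper's normalization. Your route is more elementary — no Laplacian, no Maximum Principle — and it sidesteps the paper's final verification that the derived $\phi$ actually satisfies \eqref{4}, since you never replace the exact constraint by a derived one. The paper's derivation has the side benefit of producing the explicit constants in \eqref{aa}--\eqref{cc} that feed directly into the construction in Theorem \ref{gens}, but for the rigidity statement itself your argument is cleaner.
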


\begin{proof}
 Taking the determinant of  the Hessian \eqref{ch} and applying \eqref{mae}, we obtain 
\begin{equation}\label{dd}
\begin{split}
     |\phi'|^{-2} (=   |\phi'|^{-2} \det( \partial \bar\partial u) )= &|\phi|^2\left(a a_{\bar w w} -|a_w|^2 - 4|b_{\bar w}|^2 \right) + \phi^2\left(ab_{\bar w w}- 2a_wb_{\bar w}\right) +\overline{\phi^2} \left( a\overline{b_{\bar w w}}- 2a_{\bar w} \bar b_{ w}  \right)\\
        & + \phi\left(ac_{\bar w w} - a_{   w} c_{\bar w} - 2 b_{\bar w}\bar c_{ w} \right) +\bar{\phi}\left(a\bar c_{\bar ww} - a_{ \bar   w} \bar c_{w} - 2 \bar b_{ w}c_{\bar  w}   \right) + \left( ad_{\bar w w}-|c_{\bar w}|^2\right).
        \end{split}
\end{equation}
Write both sides in terms of  the Taylor expansion of $z$ at $0$. We first collect terms without $z$ on both sides by letting $z=0$. Recalling the normalization assumptions on $\phi$, this gives
\begin{equation}\label{aa}
    ad_{\bar w w}-|c_{\bar w}|^2 =1.
\end{equation}  
Collecting the coefficients of the term   $z$ in \eqref{dd}, one has
\begin{equation}\label{1}
ac_{\bar w w} - a_{   w} c_{\bar w} - 2 b_{\bar w}\bar c_{ w}  = -\phi''(0).
\end{equation}
Collecting the coefficients of the term   $z^2$ in \eqref{dd}, one has
$$     ab_{\bar w w}- 2a_wb_{\bar w}  + \frac{\phi''(0)}{2} ( ac_{\bar w w} - a_{   w} c_{\bar w} - 2 b_{\bar w}\bar c_{ w} ) =-  \frac{\phi'''(0)}{2} + (\phi''(0))^2. $$
Combined with \eqref{1}, one futher gets
\begin{equation}\label{bb}
ab_{\bar w w}- 2a_wb_{\bar w}   =  \frac{3(\phi''(0))^2- \phi'''(0)}{2}.  
\end{equation}
Collecting the coefficients of the term   $|z|^2$ in \eqref{dd}, one has
\begin{equation}\label{cc}
a a_{\bar w w} -|a_w|^2 - 4|b_{\bar w}|^2 = |\phi''(0)|^2. 
\end{equation}
 Substituting \eqref{aa}, \eqref{1}, \eqref{bb} and \eqref{cc} into \eqref{dd}, we obtain
\begin{equation}\label{4}
\begin{split}
     |\phi'|^{-2} =  |\phi''(0)|^2|\phi|^2  + \frac{3(\phi''(0))^2- \phi'''(0)}{2}\phi^2 +\overline{  \frac{3(\phi''(0))^2- \phi'''(0)}{2} \phi^2}   -  \phi''(0) \phi -\overline { \phi''(0)\phi}  + 1.
        \end{split}
\end{equation}

Denoting  $g: = (\phi')^{-1} $ and taking $\Delta$ on both sides of \eqref{4}, one gets
$   |g'|^{2} =  |\phi''(0)|^2|\phi'|^2 $, or equivalently,
$$ \left|\frac{g'}{\phi'} \right|\equiv |\phi''(0)|. $$
Applying the Maximum Principle to the holomorphic function $g'/\phi'$, we infer 
$  g' = e^{i\theta}|\phi''(0)| \phi' $ 
for some $\theta\in [0, 2\pi)$. Plugging $g  = (\phi')^{-1} $ in, one further sees that 
$$   -\frac{\phi''}{(\phi')^2} = e^{i\theta}|\phi''(0)| \phi'.$$
Evaluating the above at $0$, we get $e^{i\theta}|\phi''(0)| = -\phi''(0) $. Thus
$   \phi'' = \phi''(0)(\phi')^3.$ 
This is equivalent to 
$ \left((\phi')^{-2}\right)' = -2\phi''(0),  $ 
and so $$\phi' =\frac{1}{ \sqrt{1-2\phi''(0)z}}.$$
Consequently, either $\phi''(0)=0$ in which case $\phi = z$, or \eqref{5} holds with $\alpha$ there equal to $ \phi''(0)$. 
In the latter case, one further computes that $ \phi'''(0) = 3(\phi''(0))^2$. Plugging this and \eqref{5} back to  \eqref{4}, after simplification we immediately observe that the equation is satisfied everywhere on $D_R$ with $R= \frac{1}{2|\alpha|}$. The proof is complete.
\end{proof}

As in the case for  $\phi = z$, the following theorem shows there are many nontrivial solutions with $\phi$ taking the form \eqref{5}. 

\begin{theorem}\label{gens}
Let $\Omega$ be a bounded domain in $\mathbb C^2$. There exists infinitely many smooth solutions to \eqref{mae} of the form \eqref{6} with $\phi$ defined in  \eqref{5} on $\Omega$.
\end{theorem}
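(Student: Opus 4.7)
The plan mirrors the strategy of Theorem~\ref{mat}. Starting from \eqref{6} with $\phi$ as in \eqref{5}, one has $\phi''(0)=\alpha$, $\phi'''(0)=3\alpha^2$, and the closed-form identity $|\phi'|^{-2}=(1-\alpha\phi)(1-\overline{\alpha\phi})=|1-2\alpha z|$. Repeating the coefficient-matching argument in the proof of Theorem~\ref{gen} (the $\phi^2,\bar\phi^2$ terms contribute $0$ because $3\alpha^2-\phi'''(0)=0$), equation \eqref{mae} for $u$ of the form \eqref{6} is equivalent to
\begin{equation*}
\left\{
\begin{aligned}
a\, a_{\bar w w}-|a_w|^2-4|b_{\bar w}|^2 &= |\alpha|^2,\\
a\, b_{\bar w w}-2\,a_w b_{\bar w} &= 0,\\
a\, c_{\bar w w}-a_w c_{\bar w}-2\,b_{\bar w}\bar c_w &= -\alpha,\\
a\, d_{\bar w w}-|c_{\bar w}|^2 &= 1.
\end{aligned}
\right.
\end{equation*}
Setting $\tilde a=\ln a$, the first two reduce to the semilinear Poisson system
$$
\Delta \tilde a=16\,e^{-2\tilde a}|b_{\bar w}|^2+4|\alpha|^2\,e^{-2\tilde a},\qquad \Delta b=8\,\tilde a_w b_{\bar w}.
$$

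Unlike the setting of Theorem~\ref{mat}, the right-hand side of the $\tilde a$ equation no longer satisfies $F(0)=0$, so Theorem~\ref{pz} does not apply directly. Instead I invoke Theorem~\ref{pz1}, applied to this complex-valued system after separating real and imaginary parts: with any prescribed initial one-jet at $0$, it produces infinitely many $C^{2,\alpha}$—and hence, by elliptic bootstrapping, smooth—pairs $(\tilde a, b)$ on some small disc $D_{r_0}\subset\mathbb C$.

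To transport these local solutions onto the given bounded domain $\Omega$, I use the scaling invariance of \eqref{mae}: a direct check shows that if $u$ is a solution on $B_{r_0}$ of the form \eqref{6} with parameter $\alpha_0$ in \eqref{5}, then $\tilde u(z,w):=(R/r_0)^2\,u((r_0/R)z,(r_0/R)w)$ is a solution on $B_R$ of the form \eqref{6} with the rescaled parameter $\alpha_0 r_0/R$ in \eqref{5} and with rescaled coefficients $\bigl(a(r_0 w/R),b(r_0 w/R),(R/r_0)c(r_0 w/R),(R/r_0)^2 d(r_0 w/R)\bigr)$. Fixing $M,N>0$ with $\Omega\subset D_M\times D_N$ and choosing $R>\max\{N,\,2M|\alpha_0|r_0\}$, the new parameter $\alpha:=\alpha_0 r_0/R$ satisfies $|\alpha|<1/(2M)$, so $\phi$ is holomorphic on $D_{1/(2|\alpha|)}\supset\pi_z(\Omega)$ while the rescaled $(\tilde a,b)$ live on $D_R\supset\pi_w(\Omega)$.

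Finally, once $(\tilde a,b)$ is in place, the third and fourth equations are linear in $c$ and $d$ with smooth data, and admit smooth solutions on $D_R$ by exactly the Poisson-type integration used at the end of the proofs of Theorems~\ref{mat} and~\ref{cf}. Substituting $(a,b,c,d)$ and $\phi$ into \eqref{6} yields the claimed smooth solution to \eqref{mae} on $\Omega$; strict plurisubharmonicity follows automatically from $\det(\partial\bar\partial u)=1$ and $u_{z\bar z}=a|\phi'|^2>0$ for a $2\times 2$ Hessian. Varying the initial jets in Theorem~\ref{pz1} produces infinitely many distinct solutions. The main obstacle compared with Theorem~\ref{mat} is the non-vanishing source $|\alpha|^2$ in the $\tilde a$ equation, which breaks the hypothesis of Theorem~\ref{pz}; this is circumvented by the combined use of Theorem~\ref{pz1} locally and a rescaling that remains compatible with the form \eqref{5} after the shift $\alpha_0\mapsto\alpha_0 r_0/R$.
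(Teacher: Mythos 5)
Your proof is correct and takes essentially the same approach as the paper: reduce \eqref{mae} for \eqref{6} with $\phi$ as in \eqref{5} to a semilinear Poisson system for $(\tilde a,b)$ plus linear equations for $c,d$, invoke Theorem~\ref{pz1} for local existence (since the source term $4|\alpha|^2 e^{-2\tilde a}$ breaks $F(0)=0$, ruling out Theorem~\ref{pz}), and rescale to cover $\Omega$. The one thing you make explicit that the paper leaves implicit is the verification that the rescaling $\tilde u(z,w)=\lambda^2 u(z/\lambda,w/\lambda)$ preserves the structural form \eqref{6}--\eqref{5}, with the parameter transforming as $\alpha\mapsto\alpha/\lambda$, which is a worthwhile detail to spell out.
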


\begin{proof}
By the rescaling method mentioned in Section 2,  it suffices to construct a solution of the form   \eqref{6} in a small neighborhood of $0$. %Due to the special structure of the equation \eqref{mae}, if  $u$ is a solution  on $B_r$ for some $r>0$, then $\tilde u(z, w): = \frac{R^2}{r^2}u(rz/R, rw/R)$ is a solution on $B_R$ for any $R>0$. Consequently, it suffices to construct a solution of the form   \eqref{6} in a small neighborhood of $0$. 

   In the case when \eqref{5} is taken, for instance with  $\alpha = 1$, in order to obtain a solution to \eqref{mae} of the form \eqref{6} near $0$, according to \eqref{aa}, \eqref{1}, \eqref{bb} and \eqref{cc}, one    looks for   the coefficients $a, b, c$ and $ d$ to satisfy 
\begin{equation*}\label{ma3}
   \left\{  \begin{aligned}
       &a a_{\bar w w} = |a_w|^2 + 4|b_{\bar w}|^2 +1;\\
       &ab_{\bar w w} = 2a_wb_{\bar w};\\
       & ac_{\bar w w} =  a_{   w} c_{\bar w} + 2 b_{\bar w}\bar c_{ w} - 1; \\
       & ad_{\bar w w} =|c_{\bar w}|^2+1.
    \end{aligned}\right.
\end{equation*}
As before letting $\tilde a = \ln a$, then 
\begin{equation}\label{ma4}
   \left\{  \begin{aligned}
      &\tilde a_{\bar w w} =   4e^{-2\tilde a} |b_{\bar w}|^2 + e^{-2\tilde a};\\
       &b_{\bar w w} = 2\tilde a_wb_{\bar w};\\
       &c_{\bar w w} =  \tilde a_{   w} c_{\bar w} + 2 e^{-\tilde a} b_{\bar w}\bar c_{ w} -e^{-\tilde a}; \\
       &d_{\bar w w} =e^{-\tilde a}\left(|c_{\bar w}|^2+1\right).
    \end{aligned}\right.
\end{equation}
Making use of Theorem \ref{pz1}, one can obtain infinitely many solutions to \eqref{ma4} near a neighborhood of $0$. These solutions yield infinitely many solutions to \eqref{mae} of the form \eqref{6} in a small neighborhood of $0$. 
\end{proof}

\section{Donaldson's equation}
 
In this section, we investigate solutions to Donaldson's equation \eqref{de}.  Donaldson's operator  is strictly elliptic when $u_{tt}>0, \Delta u>0$ and $   u_{tt}\Delta u -|\nabla u_t|^2>0 $.   He  constructed in \cite{He} infinitely many entire solutions to    \eqref{de}
on $ \mathbb R\times \mathbb R^m$ of the form
\begin{equation}\label{qde1}
    a_0t^2 + b(x) t + c(x), 
\end{equation} 
where $a_0$ is a positive constant, and $b$ and $ c$ are smooth real functions  on $x\in \mathbb R^m$. He  showed that every solution   of the form \eqref{qde1} must satisfy
    $$\Delta b = 0,\ \ \ \Delta c = \frac{1}{2a_0}(|\nabla b|^2+1). $$
 Note that, according to a result of Warren \cite{Wa}, every convex entire solution to \eqref{de} must be quadratic.
 
We shall   generalize He's idea by  allowing $a_0$ in \eqref{qde1} to  depend  on $x\in \mathbb R^m$. That is, we consider   smooth solutions  of the form  
\begin{equation}\label{qde}
  u(t, x) =   a(x) t^2 + b(x) t + c(x), 
\end{equation}
where $a, b$ and $ c$ are   smooth  real functions  on $x\in \mathbb R^m$ with $a>0$.  %On cylindrical domains $\mathbb R\times B_R$,  Theorem \ref{thd} establishes   infinitely many solutions to \eqref{de} of the form \eqref{qde} of nonconstant $a$. However, for entire solutions,  Theorem \ref{thde} shows that every entire solution of the form \eqref{qde} must  reduce to He's original case, where the coefficient $a$ is a constant. 
   Plugging it  to \eqref{de}, one gets
$$ 1=  u_{tt}\Delta u -|\nabla u_t|^2  = 2a(t^2\Delta a +t\Delta b+ \Delta c) -|2t\nabla a+\nabla b|^2.$$
Identifying coefficients of $1, t$ and $t^2$, we obtain the following nonlinear differential system
\begin{equation*}
\left\{     \begin{aligned}
        &a\Delta a = 2|\nabla a|^2;\\
        &a\Delta b = 2\nabla a\cdot \nabla b;\\
        &2a\Delta c =|\nabla b|^2+1.
    \end{aligned}\right.
\end{equation*}

Since $a>0$, making use of the transformation  $$\tilde a  = \frac{1}{a},$$ one can  immediately verifies that $\tilde a$ satisfies
\begin{equation}\label{ha}
    \Delta \tilde a =0.
\end{equation} 
There are infinitely many positive harmonic solutions $\tilde a$ on $ B_R$. For each such  $\tilde a$, plugging  $a =\frac{1}{\tilde a}$ into the linear elliptic equation \begin{equation}\label{qde2}
    \Delta b = \frac{2}{a}\nabla a\cdot \nabla b \ \ \text{on}\ \ B_R
\end{equation}    to solve for a smooth solution $b$ on $  B_R$, and then to solve
\begin{equation}\label{qde3}
     \Delta c =\frac{1}{2a}(|\nabla b|^2+1) \ \ \text{on}\ \ B_R
\end{equation}
for a smooth $c$ on $  B_R$.

\begin{theorem} \label{thd}
    For each $R>0$, and a harmonic function $\tilde a$ on $B_R$, define $a: = \tilde a^{-1}$ and let $b$ and $c$ be solutions to  \eqref{qde2}-\eqref{qde3}. Then the function $u$ defined in    \eqref{qde} with these coefficients $a, b$ and $ c$ is a smooth solution to   \eqref{de} on $ \mathbb R \times B_R$.     
\end{theorem}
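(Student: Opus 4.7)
The plan is essentially a verification: all the hard work of converting Donaldson's equation to a semilinear system has already been done in the computation preceding the theorem, so the proof reduces to checking three items. First, I would verify the algebraic identity that collapses the first equation of the derived system under the transformation $\tilde a = 1/a$. A direct computation yields
\[
\nabla a = -\tilde a^{-2}\nabla\tilde a, \qquad \Delta a = 2\tilde a^{-3}|\nabla \tilde a|^2 - \tilde a^{-2}\Delta\tilde a,
\]
so that harmonicity of $\tilde a$ on $B_R$ forces $a\Delta a = 2|\nabla a|^2$ automatically. This is exactly the content of \eqref{ha}, and it also guarantees that $a$ is smooth and positive on $B_R$ whenever the same holds for $\tilde a$.

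Next, I would address solvability of the remaining two equations. Since $\tilde a$ is a positive harmonic function on $B_R$, the coefficients in \eqref{qde2} are smooth, and the equation is a linear homogeneous second-order elliptic equation. Standard elliptic theory (the Dirichlet problem on balls $B_{R'}\Subset B_R$ with arbitrary smooth boundary data, combined with interior regularity and an exhaustion argument) yields infinitely many smooth solutions $b$ on $B_R$; in fact any constant trivially works, and non-trivial ones are produced by varying the boundary data. With such a $b$ in hand, the right-hand side of \eqref{qde3} is smooth on $B_R$, so the classical solvability of the Poisson equation $\Delta c = \frac{1}{2a}(|\nabla b|^2+1)$ produces a smooth $c$ on $B_R$.

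Finally, I would substitute these coefficients back into \eqref{qde} and carry out the direct calculation
\[
u_{tt}\Delta u - |\nabla u_t|^2 = 2a\bigl(t^2\Delta a + t\Delta b + \Delta c\bigr) - |2t\nabla a+\nabla b|^2,
\]
exactly as in the derivation above the theorem. The coefficient of $t^2$ vanishes by $a\Delta a = 2|\nabla a|^2$, the coefficient of $t$ vanishes by \eqref{qde2}, and the constant term equals $2a\Delta c - |\nabla b|^2 = 1$ by \eqref{qde3}; consequently $u$ solves \eqref{de} on $\mathbb{R}\times B_R$. There is no genuine obstacle here, since the only analytic input beyond algebra is solvability of two standard linear elliptic PDEs whose coefficients are smooth thanks to $\tilde a > 0$; the sole item worth emphasizing in the write-up is that each manipulation is reversible, which holds precisely because $a>0$ keeps every reciprocal and division well-defined throughout $B_R$.
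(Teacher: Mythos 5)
Your proposal is correct and follows essentially the same route as the paper: the paper's proof is precisely the computation preceding the theorem statement (plug the ansatz \eqref{qde} into \eqref{de}, identify powers of $t$ to get the three-equation semilinear system, and observe that $\tilde a = 1/a$ converts the first equation to $\Delta\tilde a = 0$), and your write-up reproduces this verification, including the chain-rule identity $a\Delta a - 2|\nabla a|^2 = -\tilde a^{-3}\Delta\tilde a$. Your extra remarks on solvability of \eqref{qde2}--\eqref{qde3} and on needing $\tilde a>0$ for $a$ to be a well-defined positive smooth function are consistent with the paper's (implicit) assumptions, though strictly speaking the theorem takes $b$ and $c$ as given solutions, so existence is not part of what must be proved.
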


In the case of entire solutions, Liouville's theorem implies that every positive harmonic function on $\mathbb R^m$ must be a constant. Hence from \eqref{ha} one has $\tilde a\equiv \text{const}$, and further $a\equiv \text{const}$. Moreover,     \eqref{qde2}-\eqref{qde3} gives

\begin{theorem}\label{thde}
    Every entire solution to \eqref{de} on  $\mathbb R\times \mathbb R^m$ of the form \eqref{qde} must satisfy
    $$a\equiv \text{const}, \ \ \Delta b = 0,\ \ \ \Delta c = \frac{1}{2a}(|\nabla b|^2+1)\ \ \text{on}\ \ \mathbb R^m. $$
\end{theorem}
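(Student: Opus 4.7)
The plan is to simply carry the derivation preceding Theorem \ref{thd} from a ball $B_R$ to all of $\mathbb{R}^m$, and then invoke the classical Liouville theorem for positive harmonic functions. Concretely, I would start by noting that a smooth solution $u$ of the form \eqref{qde} yields, exactly as in the text, the system
\begin{equation*}
\left\{\begin{aligned}
& a\Delta a = 2|\nabla a|^2,\\
& a\Delta b = 2\nabla a\cdot\nabla b,\\
& 2a\Delta c = |\nabla b|^2 + 1,
\end{aligned}\right.
\end{equation*}
now on all of $\mathbb{R}^m$. The substitution $\tilde a = 1/a$ (valid since $a>0$) transforms the first equation into the Laplace equation $\Delta \tilde a = 0$ on $\mathbb{R}^m$.

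The key step is then the observation that $\tilde a = 1/a$ is a \emph{positive} harmonic function on all of $\mathbb{R}^m$. By the classical Liouville theorem for positive harmonic functions, $\tilde a$ must be constant, hence $a \equiv \mathrm{const}$ on $\mathbb{R}^m$. This is the only place where the entirety of the domain is essential; on a bounded ball, positive harmonic functions form a large family, which is precisely why Theorem \ref{thd} produces many solutions there.

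Once $a$ is constant, we have $\nabla a \equiv 0$, so the second equation reduces to $\Delta b = 0$, and the third becomes $\Delta c = \frac{1}{2a}(|\nabla b|^2+1)$. These are exactly the conditions stated in Theorem \ref{thde}, completing the proof.

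I do not anticipate a genuine obstacle: the only substantive input is Liouville's theorem, and the algebraic reductions are already done in the paragraph preceding Theorem \ref{thd}. One should just verify that the reduction to $\Delta \tilde a = 0$ goes through pointwise on all of $\mathbb{R}^m$ (which is immediate since $a>0$ everywhere) before applying Liouville.
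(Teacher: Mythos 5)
Your proposal is correct and follows essentially the same route as the paper: reduce to the system of equations, substitute $\tilde a = 1/a$ to obtain $\Delta \tilde a = 0$, invoke Liouville's theorem for positive entire harmonic functions to force $\tilde a$ (hence $a$) constant, and then read off the remaining equations. The paper gives exactly this argument in the paragraph preceding the theorem statement.
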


In particular, Theorem \ref{thde} states that in the case of entire solutions,  \eqref{qde}   reduces to the situation \eqref{qde1} in \cite{He}. On the other hand, in the case when $m=1$, since the only harmonic functions are linear functions, $b$ must be linear, and thus $c$ must be quadratic in Theorem \ref{thde}. One immediately obtains the following result.

\begin{cor}\label{co1}
If $m=1$, then every entire solution to \eqref{de} of the form \eqref{qde} must be a quadratic function in $(t, x)\in \mathbb R\times \mathbb R.$
\end{cor}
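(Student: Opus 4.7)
The plan is to deduce this directly from Theorem \ref{thde} by exploiting the very restrictive form of harmonic functions in one real variable. Specifically, Theorem \ref{thde} already pins down $a$ as a positive constant, so only $b$ and $c$ need to be analyzed, and in the case $m=1$ each of the corresponding Laplace/Poisson equations reduces to an ODE $f''=\mathrm{const}$ on all of $\mathbb R$.

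First I would invoke Theorem \ref{thde} to conclude that, for any entire solution of the form \eqref{qde} on $\mathbb R\times \mathbb R$, the coefficient $a$ is a positive constant $a_0$, that $\Delta b = b'' = 0$ on $\mathbb R$, and that $\Delta c = c'' = \frac{1}{2a_0}(|\nabla b|^2+1)$ on $\mathbb R$. Integrating $b''=0$ twice immediately yields $b(x) = \alpha x + \beta$ for some real constants $\alpha, \beta$. Consequently $|\nabla b|^2 = \alpha^2$ is a constant, so the right-hand side of the equation for $c$ is itself a constant, namely $\frac{\alpha^2+1}{2a_0}$.

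Next I would integrate $c''(x) = \frac{\alpha^2+1}{2a_0}$ twice to obtain $c(x) = \frac{\alpha^2+1}{4a_0}x^2 + \gamma x + \delta$ for some real constants $\gamma, \delta$. Substituting the explicit forms of $a, b, c$ back into \eqref{qde} gives
\begin{equation*}
u(t,x) = a_0 t^2 + (\alpha x + \beta) t + \frac{\alpha^2+1}{4a_0}x^2 + \gamma x + \delta,
\end{equation*}
which is manifestly a quadratic polynomial in $(t,x)\in \mathbb R\times \mathbb R$.

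Since every step is algebraic or a trivial ODE integration once Theorem \ref{thde} is in hand, there is no real obstacle to overcome here; the only conceptual input is the rigidity provided by the fact that positive harmonic functions on $\mathbb R^m$ are constant (used already in Theorem \ref{thde}) together with the one-dimensional fact that harmonic functions on $\mathbb R$ are affine, which is what collapses $b$ to a linear function and therefore forces $c$ to be quadratic.
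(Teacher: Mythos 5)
Your proof is correct and takes essentially the same route as the paper: invoke Theorem \ref{thde} to get $a$ constant, use that harmonic functions on $\mathbb{R}$ are affine to conclude $b$ is linear, hence $c''$ is constant and $c$ is quadratic. The paper states this in a single sentence immediately before the corollary; you have simply written out the two trivial integrations explicitly.
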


It is worth noting that Corollary \ref{co} is, in fact,  a special case of \cite{Wa}. This is because when $m=1$, all  solutions obtained in Theorem \ref{thd} and Theorem \ref{thde} are automatically convex, due to  the conditions  $u_{tt}>0$  and $\det (D^2 u)=1>0$. %In contrast, Warren \cite{Wa} proved that  any convex entire solution to \eqref{de} must  be a quadratic function in $(t, x)$. 

\appendix

\section{Resolution of isolated singularity} 
Let $\Omega$ be a domain in $\mathbb R^m, m\ge 2$. A function  $u$ is said to be  a weak solution to a nonlinear differential equation 
$$  \Delta u = f(\cdot, u)\ \ \text{on}\ \ \Omega, $$
if $u\in L^1_{loc}(\Omega), f(\cdot, u)\in L^1_{loc}(\Omega)$ and for any testing function $\phi\in C_c^\infty(\Omega)$, one has
$$ \int_\Omega u\Delta \phi = \int_\Omega f(\cdot,u)\phi. $$
In this Appendix, we shall prove a removable singularity theorem as follows.

\begin{theorem}\label{rem}
   Let $\Omega$ be a domain in $\mathbb R^m$ containing the origin, $m\ge 2$. Let  $u\in C(\Omega)$ if $m=2$, or $u\in L_{loc}^\frac{m}{m-2}(\Omega) $ if $m\ge 3$, and $u$ be a weak solution to 
    $$ \Delta u = f(\cdot,u)\ \ \text{on}\ \ \Omega\setminus \{0\},$$
    where $f\in C^\infty(\mathbb R^m\times \mathbb R)$ with $f\ge 0$ and $ \frac{\partial f}{\partial u}\ge 0$. Then $u\in C^\infty(\Omega)$ and solves 
    $$ \Delta u = f(\cdot,u)\ \ \text{on}\ \ \Omega.$$
\end{theorem}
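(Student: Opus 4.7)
The overall strategy is a two-step removable singularity argument: first show that $u$ is in fact a weak solution of $\Delta u = f(\cdot,u)$ on the full domain $\Omega$, and then apply standard elliptic regularity to conclude smoothness. To carry out the extension, I would first establish that $f(\cdot,u) \in L^1_{loc}(\Omega)$ near the origin. The assumption $u \in C(\Omega)$ (if $m=2$) or $u \in L^{m/(m-2)}_{loc}(\Omega)$ (if $m \geq 3$) places $u$ in $L^1_{loc}(\Omega)$, and a standard bootstrap from the weak equation on $\Omega\setminus\{0\}$ gives $u \in C^\infty(\Omega\setminus\{0\})$, so only a neighborhood of $0$ is at stake. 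Here the hypothesis $f \geq 0$ is crucial: testing the weak equation against a non-negative truncation $\chi(1-\eta_\delta)$, with $\chi$ a smooth cutoff equal to $1$ near $0$ and $\eta_\delta$ vanishing outside $B_{2\delta}$, and sending $\delta \to 0$ by monotone convergence, reduces the matter to a uniform bound on $\int u\,\Delta(\chi(1-\eta_\delta))$. Once integrability is secured, the distribution $T := \Delta u - f(\cdot,u)$ is well defined on $\Omega$ and vanishes on $\Omega\setminus\{0\}$, so by the structure theorem for distributions supported at a point we may write $T = \sum_{|\alpha|\leq N} c_\alpha\,\partial^\alpha \delta_0$ for some finite order.

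To force all $c_\alpha = 0$, I would run a capacity-type cutoff argument. For any $\phi \in C_c^\infty(\Omega)$, the truncated test function $\phi_\epsilon := (1-\eta_\epsilon)\phi$ lies in $C_c^\infty(\Omega\setminus\{0\})$, so $\int u\,\Delta\phi_\epsilon = \int f(\cdot,u)\,\phi_\epsilon$, and the right-hand side passes to $\int f(\cdot,u)\phi$ by dominated convergence. The task reduces to showing the cross terms $\int u\,\phi\,\Delta\eta_\epsilon$ and $\int u\,\nabla\eta_\epsilon\cdot\nabla\phi$ both vanish in the limit. For $m \geq 3$, the standard radial cutoff $\eta_\epsilon(x) = \eta(|x|/\epsilon)$ works: H\"older's inequality with conjugate exponents $m/(m-2)$ and $m/2$ gives uniform bounds on $\|\nabla\eta_\epsilon\|_{L^{m/2}}$ and $\|\Delta\eta_\epsilon\|_{L^{m/2}}$, while $\|u\|_{L^{m/(m-2)}(B_{2\epsilon})} \to 0$ by absolute continuity of the integral. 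For $m=2$ the same cutoff fails because $\|\Delta\eta_\epsilon\|_{L^1}$ stays bounded away from zero, so I would use a logarithmic cutoff equal to $\log(\sqrt{\epsilon}/|x|)/(\tfrac{1}{2}|\log\epsilon|)$ on the annulus $\epsilon \leq |x| \leq \sqrt{\epsilon}$, smoothed at the endpoints. Since $\log|x|$ is harmonic on $\mathbb R^2\setminus\{0\}$, $\Delta\eta_\epsilon$ is concentrated in thin boundary shells with $\int \Delta\eta_\epsilon = 0$; decomposing $u\phi = u(0)\phi(0) + (u\phi - u(0)\phi(0))$ and invoking the continuity of $u$ controls $\int u\phi\,\Delta\eta_\epsilon$, while $\int|\nabla\eta_\epsilon|\,dx = O(1/|\log\epsilon|)$ disposes of the gradient term. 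Thus $T(\phi) = 0$ for every $\phi \in C_c^\infty(\Omega)$, and $T = 0$.

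With $u$ now a weak solution on all of $\Omega$, the conclusion follows from a routine elliptic bootstrap: $f$ is smooth and $\partial_u f \geq 0$ keeps the linearized problem well-posed, so $L^p$ theory yields $u \in W^{2,p}_{loc}$ for every finite $p$, then $u \in C^{1,\alpha}_{loc}$, and Schauder iteration gives $u \in C^\infty(\Omega)$. The main obstacle I anticipate is the two-dimensional cutoff argument: the borderline potential-theoretic capacity of a point in $\mathbb R^2$ forces one to coordinate the harmonicity of $\log|x|$ with the pointwise continuity of $u$, in contrast to the cleaner H\"older-based estimate available when $m \geq 3$. A secondary delicate point is the a priori $L^1_{loc}$ bound for $f(\cdot,u)$ near $0$, where the sign condition $f \geq 0$ is indispensable.
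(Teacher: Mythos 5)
Your overall strategy --- remove the singularity, then bootstrap --- matches the paper, and the $m\ge 3$ cutoff computation (H\"older with exponents $m/(m-2)$ and $m/2$ against a standard radial cutoff) is essentially identical to the paper's Lemma on removable singularities in dimension $\ge 3$. Two points of divergence are worth flagging, one a matter of economy and one a genuine gap.

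For $m=2$ you reach for the logarithmic capacity cutoff, which does work, but the paper's argument is considerably lighter: since $u$ is assumed continuous, one can subtract the constant $u(0)$ (which changes nothing distributionally), take an ordinary scale-$r$ cutoff with $|\Delta\phi^r|\lesssim r^{-2}$, and observe that
$r^{-2}\int_{D_r}|u-u(0)|\le\max_{D_r}|u-u(0)|\to 0$
by continuity at the origin. No decomposition, no harmonicity of $\log|x|$, no tracking of where $\Delta\eta_\epsilon$ concentrates. Your version is correct but trades the cheap use of pointwise continuity for the heavier potential-theoretic machinery.

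The real gap is in the final bootstrap. You write that ``$f$ is smooth and $\partial_u f\ge 0$ keeps the linearized problem well-posed, so $L^p$ theory yields $u\in W^{2,p}_{loc}$.'' That is not the right mechanism, and as stated the step does not go through: to start the $W^{2,p}$ estimate you need $f(\cdot,u)\in L^p_{loc}$, but a priori you only know $u\in L^1_{loc}$ (or $L^{m/(m-2)}_{loc}$), so $f(\cdot,u)$ could be unbounded if $u$ is. The correct initiation, which the paper carries out explicitly, is that the weak inequality $\Delta u=f(\cdot,u)\ge 0$ makes $u$ subharmonic after a measure-zero modification, hence upper semicontinuous and locally bounded above, say $u\le c$ on a compact $V$; then the monotonicity hypothesis gives $0\le f(\cdot,u)\le f(\cdot,c)\in L^\infty(V)$, and only now can one invoke $W^{2,p}$ theory, Sobolev embedding, and Schauder iteration. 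Both sign conditions on $f$ are used here, not for well-posedness of a linearization but to produce the a priori $L^\infty$ bound on the right-hand side that launches the regularity ladder.
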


To prove the theorem, the following Harvey-Polking lemmas (see \cite{HP}) are needed for resolving the isolated singularities. Recall that $D_r$ is the disc in $\mathbb R^2$ of radius $r$. 

\begin{lem}\label{HP}
   If $f\in L^1(D_1)$, and  $u\in C(D_1)$ is a weak solution to $ \Delta u = f$ on $D_1\setminus \{0\}$, then $u $ is a weak solution to
$\Delta u = f$ on $D_1$.
\end{lem}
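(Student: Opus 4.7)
The plan is to prove that for an arbitrary test function $\phi \in C_c^\infty(D_1)$, the identity $\int_{D_1} u \Delta \phi = \int_{D_1} f \phi$ holds, by approximating $\phi$ by test functions whose supports miss the origin, and then passing to the limit. Choose a radial cutoff $\chi_\epsilon(x) = \eta(|x|/\epsilon)$, where $\eta \in C^\infty([0,\infty))$ satisfies $\eta(r) = 0$ for $r \le 1$ and $\eta(r) = 1$ for $r \ge 2$. Then $\chi_\epsilon \phi \in C_c^\infty(D_1 \setminus \{0\})$ for $\epsilon$ small, so by hypothesis
\begin{equation*}
\int_{D_1} u \, \Delta(\chi_\epsilon \phi) = \int_{D_1} f \, \chi_\epsilon \phi.
\end{equation*}
Expanding $\Delta(\chi_\epsilon \phi) = \chi_\epsilon \Delta \phi + 2 \nabla \chi_\epsilon \cdot \nabla \phi + \phi \Delta \chi_\epsilon$, I need to analyze three terms on the left and the single term on the right.

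The easy passages are the first left-hand term and the right-hand term: since $u \in C(D_1) \subset L^\infty_{\mathrm{loc}}$ and $f \in L^1$, dominated convergence gives $\int u \chi_\epsilon \Delta \phi \to \int u \Delta \phi$ and $\int f \chi_\epsilon \phi \to \int f \phi$ as $\epsilon \to 0$. The gradient term is controlled by a size estimate: $|\nabla \chi_\epsilon| \lesssim 1/\epsilon$ on the annulus $A_\epsilon := D_{2\epsilon} \setminus D_\epsilon$, whose area is $O(\epsilon^2)$, so
\begin{equation*}
\left| \int_{D_1} u \, \nabla \chi_\epsilon \cdot \nabla \phi \right| \lesssim \|u\|_{L^\infty(A_\epsilon)} \|\nabla \phi\|_\infty \cdot \frac{1}{\epsilon} \cdot \epsilon^2 = O(\epsilon) \to 0.
\end{equation*}

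The main obstacle is the third term $\int u \phi \Delta \chi_\epsilon$, because $|\Delta \chi_\epsilon| \lesssim 1/\epsilon^2$ on $A_\epsilon$ makes $\int_{D_1} |\Delta \chi_\epsilon|$ merely bounded, not small. The key observation is that, since $\chi_\epsilon$ is radial with $\chi_\epsilon'$ compactly supported in $(0,\infty)$, writing $\Delta \chi_\epsilon = \frac{1}{r}(r \chi_\epsilon'(r))'$ and integrating yields $\int_{D_1} \Delta \chi_\epsilon \, dx = 2\pi [r \chi_\epsilon'(r)]_0^\infty = 0$. Therefore I can subtract the constant $u(0)\phi(0)$ for free:
\begin{equation*}
\int_{D_1} u \phi \, \Delta \chi_\epsilon \, dx = \int_{A_\epsilon} \bigl( u(x)\phi(x) - u(0)\phi(0) \bigr) \Delta \chi_\epsilon(x) \, dx.
\end{equation*}
By continuity of $u\phi$ at $0$, with modulus of continuity $\omega$, the integrand is bounded by $\omega(2\epsilon)$ on $A_\epsilon$, while $\int |\Delta \chi_\epsilon| \lesssim 1$ uniformly in $\epsilon$; hence the term is $O(\omega(2\epsilon)) \to 0$. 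Combining the four limits yields the required weak identity on all of $D_1$, completing the proof.
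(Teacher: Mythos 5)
Your proof is correct and is essentially the same cutoff-function argument as the paper's: both proofs apply the weak identity to the truncated test function $(1-\phi^r)\phi$ (equivalently $\chi_\epsilon\phi$) and show the error terms introduced by the cutoff tend to zero as the cutoff shrinks, with the continuity of $u$ at the origin used precisely to kill the $\phi\,\Delta(\text{cutoff})$ contribution. The only organizational difference is that the paper first normalizes $u(0)=0$, which lets it bundle all three cutoff-derivative terms into the single estimate $r^{-2}\int_{D_r}|u|\lesssim\max_{D_r}|u|\to 0$, whereas you keep $u(0)$ general and instead exploit $\int\Delta\chi_\epsilon=0$ to subtract $u(0)\phi(0)$ from the troublesome term --- a tidy alternative that accomplishes the same normalization on the fly.
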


\begin{proof}
Since $u-u(0)\in C(D_1)$, and  is also a weak solution to $ \Delta u = f$ on $D_1\setminus \{0\}$, without loss of generality assume $u(0)=0$. Given $0<r<1$,    let $\phi^r$ be a smooth function on $D_1$ such that $\phi^r = 1$ on $D_{\frac{r}{2}}$,  $\phi^r = 0$ outside $D_{r}$ and $|\Delta \phi^r|\lesssim r^{-2}$ on $D_r$. Then for any testing function $\phi$ on $D_1$, $(1-\phi^r)\phi$ is a testing function on $D_1 \setminus \{0\} $. Thus 
    $$ \langle  \Delta u -f , (1-\phi^r)\phi\rangle =0, $$
 and so
    \begin{equation*}
        \begin{split}
            \langle \Delta u -f , \phi\rangle =  \langle  \Delta u -f , \phi^r\phi\rangle  
            =  \langle  u,   \Delta(\phi^r\phi)\rangle -  \langle  f , \phi^r\phi\rangle.
        \end{split}
    \end{equation*}
    Passing $r$ to $0$, since $f\in L^1(D_1)$, $$ \langle  f , \phi^r\phi\rangle \lesssim \int_{D_{r} }|f| \rightarrow 0.$$
  On the other hand,   $$\langle  u,  \Delta(\phi^r\phi)\rangle \lesssim r^{-2} \int_{D_r }|u|\le \max_{D_r}|u|  
    \rightarrow 0.$$   
  We thus have the desired identity $  \langle  \Delta u -f , \phi\rangle =0.  $   
\end{proof}

The continuity assumption on $u$ in Lemma \ref{HP} can not be dropped, as demonstrated by the following example.  

\begin{example}
 Let $u$ be a smooth solution to 
 $$ \Delta u = 4e^{2u} \ \ \text{on}\ \ D_1. $$
For instance, one can check that $u = - \ln (1-|x|^2)$ is such a solution. % (The existence of such solutions follows from Theorem \ref{pz1}.)   
Consequently, $v: = u -\ln |x|  $ is a  smooth solution to 
 $$ \Delta v =4|x|^2 e^{2v} \ \ \text{on}\ \ D_1\setminus \{0\}. $$
However, $v$ is not a weak solution to 
    $$\Delta v =4|x|^2e^{2v} \ \ \text{on}\ \ D_1,$$
    since  $-\frac{1}{2\pi}\ln|x|$ is the fundamental solution to $\Delta$. Note that $v\notin C(D_1)$, so Lemma \ref{HP} does not apply.
\end{example}

 By slightly adjusting the proof of Lemma \ref{HP}, one can   resolve  isolated singularities in higher dimensional case, under a weaker assumption on the regularity of $u$ than continuity. %The continuity assumption of $u$ can be weakened to letting $u\in L_{loc}^\frac{n}{n-2} $ instead. 

\begin{lem}\label{HPn}
  Let $B_1$ be the unit ball in $\mathbb R^m, m\ge 3$.   If $f\in L_{loc}^1(B_1)$, and  $u\in L_{loc}^\frac{m}{m-2}(B_1)$ is a weak solution to $ \Delta u = f$ on $B_1\setminus \{0\}$, then $u $ is a weak solution to
$\Delta u = f$ on $B_1$.
\end{lem}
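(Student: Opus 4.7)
The plan is to mimic the proof of Lemma \ref{HP} essentially verbatim, with the only substantive change being how one handles the term involving $u$ and a rescaled cutoff. First I would pick a radial cutoff $\phi^r \in C_c^\infty(B_1)$ satisfying $\phi^r \equiv 1$ on $B_{r/2}$, $\phi^r \equiv 0$ outside $B_r$, $|\nabla \phi^r| \lesssim r^{-1}$, and $|\Delta \phi^r| \lesssim r^{-2}$. For any test function $\phi \in C_c^\infty(B_1)$, the product $(1-\phi^r)\phi$ is compactly supported in $B_1 \setminus \{0\}$ and is therefore a legitimate test function for the weak equation there, giving
\begin{equation*}
0 = \langle \Delta u - f, (1-\phi^r)\phi\rangle = \langle \Delta u - f, \phi\rangle - \langle u, \Delta(\phi^r\phi)\rangle + \langle f, \phi^r\phi\rangle.
\end{equation*}

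The plan then reduces to proving that both $\langle f, \phi^r \phi\rangle$ and $\langle u, \Delta(\phi^r \phi)\rangle$ vanish as $r\to 0$. The first is immediate: it is dominated by $\|\phi\|_\infty \int_{B_r}|f|$, which tends to $0$ by absolute continuity of the integral since $f\in L^1_{loc}$. For the second, I would expand
\begin{equation*}
\Delta(\phi^r \phi) = \phi\, \Delta \phi^r + 2\, \nabla \phi^r \cdot \nabla \phi + \phi^r\, \Delta \phi,
\end{equation*}
and observe that each term is supported in $B_r$ and pointwise bounded by a constant times $r^{-2}$ (the other two being of order $r^{-1}$ and $1$). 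Thus the entire contribution is controlled by $C r^{-2} \int_{B_r}|u|$.

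The key new ingredient, replacing the continuity hypothesis used when $m=2$, is the $L^{m/(m-2)}_{loc}$ assumption, which is chosen precisely to match the scaling. Applying H\"older's inequality with the dual exponents $p = \frac{m}{m-2}$ and $q = \frac{m}{2}$, I get
\begin{equation*}
\int_{B_r}|u| \le \|u\|_{L^{m/(m-2)}(B_r)} \cdot |B_r|^{2/m} \lesssim r^2\, \|u\|_{L^{m/(m-2)}(B_r)}.
\end{equation*}
Therefore $r^{-2}\int_{B_r}|u| \lesssim \|u\|_{L^{m/(m-2)}(B_r)} \to 0$ as $r\to 0$, the latter by absolute continuity of the integral. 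Combining both error estimates yields $\langle \Delta u - f, \phi\rangle = 0$ for every test $\phi$, which is exactly the weak equation on all of $B_1$.

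I do not expect any serious obstacle: the proof is essentially the Harvey–Polking scheme with the right Lebesgue exponent dictated by the fundamental solution $|x|^{2-m}$, which is locally in $L^{m/(m-2),\infty}$. The only delicate point is to ensure the exponent $m/(m-2)$ is indeed sharp enough for the bound $r^{-2}\int_{B_r}|u| \to 0$, and H\"older together with the volume estimate $|B_r| \sim r^m$ makes this immediate.
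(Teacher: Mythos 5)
Your proof is correct and follows essentially the same route as the paper: reuse the Harvey--Polking cutoff argument from the $m=2$ case and replace the continuity bound $r^{-2}\int_{B_r}|u|\le \max_{B_r}|u|$ by H\"older's inequality with exponents $\tfrac{m}{m-2}$ and $\tfrac{m}{2}$, so that $r^{-2}\int_{B_r}|u|\lesssim \|u\|_{L^{m/(m-2)}(B_r)}\to 0$. This is exactly what the paper does.
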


\begin{proof}
    In view of the proof to Lemma \ref{HP}, we only need to verify that $ r^{-2} \int_{B_r }|u|\rightarrow 0 $. This is obvious by H\"older's inequality:
    $$ r^{-2} \int_{B_r }|u|\le r^{-2} \left(\int_{B_r}|u|^\frac{m}{m-2}\right)^{\frac{m-2}{m}}\left(\int_{B_r} 1\right)^\frac{2}{m} \lesssim \left(\int_{B_r}|u|^\frac{m}{m-2}\right)^{\frac{m-2}{m}}\rightarrow 0$$
    as $r\rightarrow 0$.
\end{proof}

Next, we prove that under suitable smoothness and growth assumptions on $f$, any weak solution in fact belongs to a higher regularity class, thereby becoming a classical solution. This together with Lemma \ref{HP} and Lemma \ref{HPn} proves Theorem \ref{rem}. 

\begin{pro}\label{reg}
   Let $\Omega$ be a domain in $\mathbb R^m$.  Let $u$ be a  weak solution to
    $$  \Delta u = f(\cdot, u)\ \ \text{on}\ \ \Omega,$$
    where $f\in C^\infty(\mathbb R^m\times \mathbb R)$ with $f\ge 0, \frac{\partial f}{\partial u}\ge 0$. Then $u$   is smooth on $\Omega$. 
\end{pro}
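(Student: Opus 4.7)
The plan is to establish smoothness of $u$ by a standard elliptic bootstrap, with the sign condition $f \geq 0$ and the monotonicity $\partial_u f \geq 0$ supplying just enough initial control on the right-hand side to start the machine. The structural observation is that the weak equation $\Delta u = f(\cdot, u) \geq 0$ makes $u$ a distributional subharmonic function, from which a local upper bound on $u$ can be extracted; combined with monotonicity, this yields a two-sided $L^\infty_{loc}$ bound on $\Delta u$.

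First I would show that $u$ is locally bounded above on $\Omega$. Since $f(\cdot, u) \in L^1_{loc}(\Omega)$ is nonnegative, $u$ is subharmonic in the distributional sense, hence agrees a.e.\ with an upper-semicontinuous representative that is subharmonic in the classical sense and satisfies the sub-mean-value inequality on every ball $B_r(x) \Subset \Omega$. For any $\Omega' \Subset \Omega'' \Subset \Omega$, this gives
\[
u(x) \;\leq\; \frac{1}{|B_r|}\int_{B_r(x)} u \;\leq\; \frac{\|u\|_{L^1(\Omega'')}}{|B_r|}
\]
for $x \in \Omega'$ with $r$ small and fixed, so $u \leq M$ on $\Omega'$ for some constant $M = M(\Omega',\Omega'',\|u\|_{L^1})$. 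An equivalent mollification argument replaces $u$ by $u_\varepsilon = u \ast \rho_\varepsilon$, which is smooth and subharmonic because $\Delta u_\varepsilon \geq 0$, invokes the classical mean-value inequality uniformly in $\varepsilon$, and passes to the limit.

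Next, the monotonicity $\partial_u f \geq 0$ yields the pointwise sandwich $0 \leq f(x,u(x)) \leq f(x,M)$ a.e.\ on $\Omega'$, and since $f$ is smooth we conclude $\Delta u = f(\cdot,u) \in L^\infty_{loc}(\Omega)$. Interior $W^{2,p}$ estimates for the Laplacian then upgrade this to $u \in W^{2,p}_{loc}(\Omega)$ for every $p<\infty$, hence $u \in C^{1,\alpha}_{loc}(\Omega)$ for every $\alpha \in (0,1)$ by Sobolev embedding. Now $f(\cdot,u) \in C^{0,\alpha}_{loc}$ by smoothness of $f$ and the chain rule, so interior Schauder estimates promote $u$ to $C^{2,\alpha}_{loc}$. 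Iterating — each new order of regularity for $u$ feeds the same order into $f(\cdot,u)$ and then back into $u$ via Schauder — produces $u \in C^\infty(\Omega)$.

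The genuinely delicate step is the first one: extracting a pointwise local upper bound on $u$ from the bare weak-solution hypothesis, with no a priori two-sided control. It rests on the classical fact that a distributional subharmonic $L^1_{loc}$ function admits a classically subharmonic representative, after which the monotonicity of $f$ is what converts this one-sided bound on $u$ into a two-sided $L^\infty_{loc}$ bound on $f(\cdot,u)$. Everything past that point is a textbook $L^\infty \to W^{2,p} \to C^{1,\alpha} \to C^\infty$ elliptic bootstrap that requires no further input from the specific structure of the equation.
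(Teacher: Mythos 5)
Your proof follows essentially the same route as the paper's: both use the nonnegativity of $f$ to get distributional subharmonicity of $u$, hence an upper-semicontinuous subharmonic representative and a local upper bound on $u$; both then invoke monotonicity $\partial_u f \ge 0$ to control $f(\cdot,u)$ from above and conclude $\Delta u \in L^\infty_{loc}$; and both finish with the standard $W^{2,p} \to C^{1,\alpha} \to$ Schauder bootstrap. The only cosmetic difference is that you derive the local upper bound from the sub-mean-value inequality with an explicit constant, whereas the paper simply cites H\"ormander for the subharmonic representative and then uses upper semicontinuity on compacta.
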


\begin{proof}
  Since $\Delta u\ge 0$ in the sense of distributions, $u$ become subharmonic on $\Omega$ after redefining its values on a measure zero set.   See, for instance, \cite[Theorem 3.2.11]{Ho}. Hence $u$ is upper semi-continuous on $\Omega$. In particular, for every $V\subset\subset \Omega$, there exists a constant $c>0$ such that  $u\le c$ on $V$. Consequently, by the monotonicity of $f$ with respect to $u$, 
  $$0\le f(\cdot, u)\le  f(\cdot,  c)  \ \text{on}\ \ V. $$
This implies $\Delta u\in L^\infty(V)$. 

From the  $W^{2, p}$ theory of elliptic equations, we deduce that $u\in W^{2, p}(V)$ for any $p<\infty$. The Sobolev embedding theorem then yields $u\in C^{1,\alpha}(V)$ for all $0<\alpha<1$. Applying Schauder theory, we can further obtain $u\in C^{3, \alpha}(V)$. A standard bootstrapping argument eventually gives 
 $u\in C^\infty(V)$.
\end{proof}

An immediate consequence of (the proof to) Theorem \ref{rem}  is an improved regularity   for every weak solution to  $ \Delta u = e^{cu}, $ where $c$ is  a positive constant.     

\begin{cor}
     Let $\Omega$ be a domain in $\mathbb R^m$ and $c$ be a positive constant. Then every weak solution  to 
    $$ \Delta u = e^{cu}\ \ \text{on}\ \ \Omega$$
    must be smooth on $\Omega$. 
\end{cor}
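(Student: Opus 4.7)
The corollary is an immediate application of Proposition \ref{reg} to the specific nonlinearity $f(x,u):=e^{cu}$, which happens not to depend on the spatial variable $x$. First I would verify the three hypotheses required by the proposition: $f\in C^\infty(\mathbb R^m\times\mathbb R)$ is clear since the exponential is entire in $u$ and constant in $x$; the positivity $f\ge 0$ is clear since $e^{cu}>0$; and $\partial f/\partial u = c\,e^{cu}\ge 0$ holds because $c>0$ by assumption. Proposition \ref{reg} then gives $u\in C^\infty(\Omega)$ directly, with no further work needed.

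The point worth emphasizing is that no additional growth or integrability assumption on $u$ is imposed beyond what is already built into the definition of weak solution adopted in the appendix: by that definition one has $u\in L^1_{loc}(\Omega)$ and $e^{cu}\in L^1_{loc}(\Omega)$ automatically, and these are the only substantive integrability requirements entering Proposition \ref{reg}. All the regularity gain is internal to that proposition's proof: the sign condition $f\ge 0$ forces $\Delta u\ge 0$ as a distribution, so $u$ agrees a.e.\ with a subharmonic (hence upper semi-continuous, hence locally bounded above) function; monotonicity of $f$ in the $u$-slot then upgrades this to an $L^\infty_{loc}$ bound on $e^{cu}$; standard $W^{2,p}$ theory, Sobolev embedding into $C^{1,\alpha}$, and Schauder bootstrapping finish the job.

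\textbf{Anticipated obstacle.} There is essentially none of substance; the corollary is a one-line invocation once Proposition \ref{reg} is in hand. The only place a genuine difficulty could reappear would be if one wished to strengthen the statement by allowing an isolated singularity and appealing to Theorem \ref{rem}: there one must verify the $L^{m/(m-2)}_{loc}$ hypothesis (or continuity, when $m=2$) across the singular point, which is nontrivial for the exponential nonlinearity as illustrated by the example $v=u-\ln|x|$ in the appendix. As the corollary is stated, no singular set is present, so this complication does not arise and the proof reduces to the hypothesis check above.
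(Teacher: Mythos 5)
Your proposal is correct and matches the paper's intended argument: the corollary is stated as "an immediate consequence of (the proof to) Theorem~\ref{rem}," and the relevant ingredient is indeed Proposition~\ref{reg}, applied with $f(x,u)=e^{cu}$, whose hypotheses ($f\in C^\infty$, $f\ge 0$, $\partial f/\partial u=ce^{cu}\ge 0$ since $c>0$) you verify exactly as the paper intends. Your remark about the $L^{m/(m-2)}_{loc}$ (or continuity) hypothesis being needed only if one wanted to remove an isolated singularity is also accurate and consistent with the paper's example $v=u-\ln|x|$.
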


It is natural to ask whether a similar regularity-improving property as in  Theorem \ref{rem} still holds if the condition $\frac{\partial f}{\partial u}\ge 0 $ is dropped, as our approach does not extend to this case. For instance,   the equation  $\Delta u =  e^{-u} $, where $f(u) := e^{-u}$ satisfies $\frac{\partial f}{\partial u}< 0 $. The following example demonstrates that  the property fails if $m\ge 3 $.    The situation for $m=2$ remains unclear.  

\begin{example}
Let $m\ge 3$. A direct computation can verify that $u(x)= 2\ln |x|-\ln(2m-4)$ is a smooth solution to 
    $$ \Delta u = e^{-u}\ \ \text{on}\ \ \mathbb R^m\setminus\{0\}.$$
On the other hand, $e^{-u} = \frac{2m-4}{|x|^2}\in L^1_{loc}(\mathbb R^m)$, and $u\in L^p_{loc}(\mathbb R^m)$ for all $p<\infty$. According to Lemma \ref{HPn}, $u$ is a weak solution to 
$$\Delta u = e^{-u}\ \ \text{on}\ \ \mathbb R^m. $$
However, $u$ is not even continuous at $0$.
\end{example}

\bibliographystyle{alphaspecial}

\fontsize{11}{11}\selectfont

\vspace{0.7cm}

\noindent pan@pfw.edu,

\vspace{0.2 cm}

\noindent Department of Mathematical Sciences, Purdue University Fort Wayne, Fort Wayne, IN 46805-1499, USA.\\

\noindent zhangyu@pfw.edu,

\vspace{0.2 cm}

\noindent Department of Mathematical Sciences, Purdue University Fort Wayne, Fort Wayne, IN 46805-1499, USA.\\
\end{document}